\begin{document}

\title{Critical configurations for two projective views, a new approach}
\author{Martin Bråtelund}
\address{Department of Mathematics, University of Oslo, Moltke Moes vei 35, 0316 Oslo Norway\\
mabraate@mail.uio.no}
\date{\today}

\begin{abstract}
This article develops new techniques to classify critical configurations for 3D scene reconstruction from images taken by unknown cameras. Generally, all information can be uniquely recovered if enough images and image points are provided, but there are certain cases where unique recovery is impossible; these are called \emph{critical configurations}. In this paper, we use an algebraic approach to study the critical configurations for two projective cameras. We show that all critical configurations lie on quadric surfaces, and classify exactly which quadrics constitute a critical configuration. The paper also describes the relation between the different reconstructions when unique reconstruction is impossible.
\end{abstract}
\begin{keyword}
critical configurations,
projective geometry,
synthetic geometry,
multiple-view geometry,
quadric surfaces,
structure from motion,
birational geometry,
\end{keyword}
\maketitle


\setlength{\parindent}{0 pt }
\setlength{\parskip}{2.25 ex plus 0.75 ex minus 0.3 ex }

\section{Introduction}
In computer vision, one of the main problems is that of \emph{structure from motion}, where given a set of $2$-dimensional images the task is to reconstruct a scene in $3$-space and find the camera positions in this scene. Over time, many techniques have been developed for solving these problems for varying camera models and under different assumptions on the space and image points \cite{maybank1993theory, hartleyzisserman}. In general, with enough images and enough points in each image, one can uniquely recover all information about the original scene. However, there are also some configurations of points and images where a unique recovery is never possible. These are called \emph{critical configurations}.

Much work has been done to understand critical configurations for various settings \cite{buchanan1988twistedcubic, hartley2000, hartleyKahlAstrom2001, HKCalibrated, bertolini2007criticalOneView, HK, bertolini2020criticalRank, BunchananCriticalLines}, with results dating as far back as 1941 \cite{Krames1941}. While interesting from a purely theoretical viewpoint, critical configurations also play a part in practical applications. Even though critical configurations are rare in real-life reconstruction problems when enough data is available (due to noise), it has been shown that as the configurations approach the critical ones, reconstruction algorithms tend to become less and less stable \cite{stability, HK, bertoliniStability}.

Our new techniques confirm the main result of \cite{HK} for two cameras, and provide proofs for many unproven assertions in \cite{HK} (see Section 4). Moreover, they yield explicit maps between different critical configurations that have the same images (see Section 5). We also give a complete description of all critical configurations in the case of a single camera (see Section 3). Our companion paper \cite{threeViews} uses the techniques developed in this article to correct the classification of \cite{HK} for three cameras. We plan to use these new techniques to classify critical configurations for any number of views, as well as using them in other, more complicated scenarios (e.g., cameras observing lines \cite{felixLineMVI, BunchananCriticalLines}, or rolling-shutter cameras \cite{rollingShutterDegeneracies}) in future work. 

The main result of this paper is the classification of the critical configurations for two views in \cref{thr:critical_conf_for_two_views}.

\section{Background}
\label{sec:background}
We refer the reader to \cite{hartleyzisserman} for the basics on computer vision and multi-view geometry.

Let $\mathbb{C}$ denote the complex numbers, and let $\p{n}$ denote the projective space over the vector space $\mathbb{C}^{n+1}$. Projection from a point $p\in\p3$ is a linear map
\begin{equation*}
P\from \p3\dashrightarrow\p2.
\end{equation*}
We refer to such a projection and its projection center $p$ as a \emph{camera} and its \emph{camera center} (following established terminology, we use the words \emph{camera} and \emph{view} interchangeably). Following this theme, we refer to points in $\p3$ as \emph{space points} and points in $\p2$ as \emph{image points}. Similarly, $\p2$ will be referred to as an \emph{image}.

Once a basis is chosen in $\p3$ and $\p2$, a camera can be represented by a $3\times4$ matrix of full rank called the \emph{camera matrix}. The camera center is then given as the kernel of the matrix. For the most part, we make no distinction between a camera and its camera matrix, referring to both simply as cameras. We use the \emph{real projective pinhole camera model}, meaning that we require a camera matrix to be of full rank and to have only real entries.

\begin{remark}
\label{rem:basis_chosen}
Throughout the paper, whenever we talk about cameras it is to be understood that a choice of basis has been made, both on the images and 3-space.
\end{remark}

Since the map $P$ is not defined at the camera center $p$, it is not a morphism. This problem can be mended by taking the blow-up. Let $\widetilde{\p3}$ be the blow-up of $\p3$ in the camera center of $P$. We then get the following diagram:
\begin{equation*}
\begin{tikzcd}
\widetilde{\p3} \arrow[r, "P"] \arrow[d, "\pi"'] & \p2 \\
\p3 \arrow[ru, "P"', dashed]                     &    
\end{tikzcd}
\end{equation*}
where $\pi$ denotes the blow-down of $\widetilde{\p3}$. This gives a morphism from $\widetilde{\p3}$ to $\p2$. For ease of notation, we retain the symbol $P$ and the names \emph{camera} and \emph{camera center}, although one should note that in $\widetilde{\p3}$ the camera center is no longer a point, but an exceptional divisor. 

\begin{definition}
Given an $n$-tuple of cameras $\textbf{P}=(P_1,\ldots,P_n)$, with camera centers $p_1,\ldots,p_n$, let $\widetilde{\p3}$ denote the blow-up of $\p3$ in the $n$ camera centers. We define the \emph{joint camera map} to be the map
\begin{align*}
\widetilde{\phi_\textbf{P}}=P_1\times\cdots\times P_n\from\widetilde{\p3}&\to(\p2)^{n},\\
x&\mapsto (P_1(x),\ldots ,P_n(x)).
\end{align*}
\end{definition}
\begin{remark}
Throughout the paper, we assume that all camera centers are distinct.
\end{remark}
This again gives a commutative diagram
\begin{equation*}
\begin{tikzcd}
\widetilde{\p3} \arrow[r, "\widetilde{\phi_\textbf{P}}"] \arrow[d, "\pi"'] & (\p2)^{n} \\
\p3 \arrow[ru, "\phi_\textbf{P}"', dashed]                     &    
\end{tikzcd}
\end{equation*}
The reason we use the blow-up $\widetilde{\p3}$ rather $\p3$ is to turn the cameras (and hence the joint camera map) into morphisms rather than rational maps. This ensures that the image of the joint camera map is Zariski closed, turning it into a projective variety. 

\begin{definition}
We denote the image of the joint camera map $\widetilde{\phi_\textbf{P}}$ as the \emph{multi-view variety} of $P_1,\ldots,P_n$. The set of all homogeneous polynomials vanishing on $\im(\widetilde{\phi_\textbf{P}})$ is an ideal that we denote as the \emph{multi-view ideal}.
\end{definition}

\textbf{Notation.} Most works on computer vision do not use the blow-up, defining cameras/the joint camera map as rational maps rather than morphisms. Hence, they tend to define the multi-view variety as the \emph{closure} of the image rather than as the image itself. While our definition of the multi-view variety seems different, it is equivalent to that used in other works, like \cite{Tomas}.

\textbf{Notation.} While the multi-view variety is always irreducible, we use the term \emph{variety} to also include reducible algebraic sets.

\begin{definition}
Given a set of points $S\subset(\p2)^{n}$, a \emph{reconstruction} of $S$ is an $n$-tuple of cameras $\textbf{P}=(P_1,\ldots,P_n)$ and a set of points $X\subset \widetilde{\p3}$ such that $S=\widetilde{\phi_\textbf{P}}(X)$ where $\widetilde{\phi_\textbf{P}}$ is the joint camera map of the cameras $P_1,\ldots,P_n$. 
\end{definition}

\begin{definition}
Given a configuration of cameras and points $(P_1,\ldots,P_n,X)$, we refer to $\widetilde{\phi_\textbf{P}}(X)\subset(\p2)^{n}$ as the \emph{images} of $(P_1,\ldots,P_n,X)$.
\end{definition}

Given a set of image points $S\subset(\p2)^{n}$ as well as a reconstruction $(P_1,\ldots,P_n,X)$ of $S$, note that any scaling, rotation, translation, or more generally, any real projective transformation of $(P_1,\ldots,P_n,X)$ does not change the images, giving rise to a large family of reconstructions of $S$. However, we are not interested in differentiating between these reconstructions.

\begin{definition}
\label{def:equvalent_configurations}
Given a set of points $S\subset(\p2)^{n}$, let $(\textbf{P},X)$ and $(\textbf{Q},Y)$ be two reconstructions of $S$, let $\overline{X}$ and $\overline{Y}$ denote the blow-downs of $X$ and $Y$ respectively and let $P_i'$ and $Q_i'$ be the matrix representation of $P_i$ and $Q_i$ respectively. The two reconstructions of $S$ are considered \emph{equivalent} if there exists an element $A\in\PGL(4)$, such that
\begin{align*}
&A(\overline{X})=\overline{Y},\\
&P_i'A^{-1}=Q_i',\quad \forall i.
\end{align*}
\end{definition}

From now on, whenever we talk about a configuration of cameras and points, it is to be understood as unique up to such an isomorphism/action of $\PGL(4)$, and two configurations are considered different only if they are not isomorphic/do not lie in the same orbit under this group action. As such, we consider a reconstruction to be unique if it is unique up to action by $\PGL(4)$.

\begin{definition}
Given a configuration of cameras and points $(P_1,\ldots,P_n,X)$, a \emph{conjugate configuration} is a configuration $(Q_1,\ldots,Q_n,Y)$, nonequivalent to the first, such that $\widetilde{\phi_\textbf{P}}(X)=\widetilde{\phi_\textbf{Q}}(Y)$. Pairs of points $(x,y)\in X\times Y$ are called \emph{conjugate points} if $\widetilde{\phi_\textbf{P}}(x)=\widetilde{\phi_\textbf{Q}}(y)$.
\end{definition}

\begin{definition}
\label{def:conjugate_configuration/point}
A configuration of cameras $(P_1,\ldots,P_n)$ and points $X\subset \widetilde{\p3}$ is said to be a \emph{critical configuration} if it has at least one conjugate configuration. A critical configuration $(P_1,\ldots,P_n,X)$ is said to be \emph{maximal} if there exists no critical configuration $(P_1,\ldots,P_n,X')$ such that $X\subsetneq X'$.
\end{definition}

Hence, a configuration is critical if and only if the images it produces do not have a unique reconstruction.

\begin{remark}
Various definitions of critical configurations exist. For instance, \cite{Krames1941} considers the cone with two cameras on the same generator to be critical, while it fails to be critical by our definition. We use a definition similar to the one in \cite{HK}, except we are working in $\widetilde{\p3}$. If one considers the blow-down, our definition matches the results in \cite{HK}.
\end{remark}

\begin{definition}
\label{def:set_of_critical_points}
Let $\textbf{P}$ and $\textbf{Q}$ be two $n$-tuples of cameras, let $\widetilde{\p3}_P$ and $\widetilde{\p3}_Q$ denote the blow-up of $\p3$ in the camera centers of $\textbf{P}$ and $\textbf{Q}$ respectively. Projecting the fiber product
\begin{align*}
\widetilde{\p3}_P\times_{\p2}\widetilde{\p3}_Q=\Set{(x,y)\in\widetilde{\p3}_P\times\widetilde{\p3}_Q\mid \widetilde{\phi_\textbf{P}}(x)=\widetilde{\phi_\textbf{Q}}(y)}
\end{align*}
to the first coordinate gives a variety, $X$. We call $X$ the \emph{set of critical points of $P$ with respect to $Q$}.
\end{definition}

This definition is motivated by the following fact:

\begin{proposition}
\label{prop:critical_points_give_critical_configuration}
Let $\textbf{P}$ and $\textbf{Q}$ be two (different) $n$-tuples of cameras, and let $X$ and $Y$ be their respective sets of critical points. Then
$(\textbf{P},X)$ is a critical configuration, with $(\textbf{Q},Y)$ as its conjugate. Furthermore, $(\textbf{P},X)$ is maximal with respect to $\textbf{Q}$ in the sense that if there exists a critical configuration $(\textbf{P},X')$ with $X\subsetneq X'$ then its conjugate consists of cameras different from $\textbf{Q}$
\end{proposition}
\begin{proof}
It follows from \cref{def:set_of_critical_points} that for each point $x\in X$, we have a conjugate point $y\in Y$. Hence the two configurations have the same images, so they are both critical configurations, conjugate to one another. 

The (partial) maximality follows from the fact that if we add a point $x_0$ to $X$ that does not lie in the set of critical points, there is (by \cref{def:set_of_critical_points}) no point $y_0\in\widetilde{\p3}_Q$ such that $\widetilde{\phi_\textbf{P}}(x_0)=\widetilde{\phi_\textbf{Q}}(y_0)$. Hence $(\textbf{P},X)$ will no longer be critical.
\end{proof}

For two different pairs of cameras, the sets of critical points turn out to be the quadric surfaces $\widetilde{S_P},\widetilde{S_Q}$ described in \cref{sec:preliminary_two_views}.

The goal of this paper is to classify all maximal critical configurations for three cameras. The reason we focus primarily on the maximal ones is that every critical configuration is contained in a maximal one and (when working with more than one camera) the converse is true as well, any subconfiguration of a critical configuration is itself critical.

We conclude this section with a final, useful property of critical configurations, namely that the only property of the cameras we need to consider when exploring critical configurations is the position of their camera centers (i.e. change of coordinates in the images does not affect criticality). 

\begin{proposition}[{\cite[Proposition 3.7]{HK}}]
\label{prop:only_camera_centers_matter}
Let $(P_1,\ldots,P_n)$ be $n$ cameras with centers $p_1,\ldots,p_n$, and let $(P_1,\ldots,P_n,X)$ be a critical configuration. \newline If $(P_1',\ldots,P_n')$ is a set of cameras sharing the same camera centers, the configuration $(P_1',\ldots,P_n',X)$ is critical as well.
\end{proposition}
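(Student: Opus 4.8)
\section*{Proof proposal}

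The plan is to exploit the elementary fact that two cameras with the same center differ only by a projective change of coordinates in the image. Concretely, if $P_i$ and $P_i'$ are $3\times4$ matrices of full rank with the same kernel $p_i$, then their row spaces coincide as $3$-dimensional subspaces of $(\mathbb{C}^4)^*$, so there is an invertible $3\times3$ matrix $H_i$ with $P_i'=H_iP_i$; moreover $H_i$ can be taken real when both cameras are real. On the level of the blown-up space, $P_i$ and $P_i'$ have the same indeterminacy locus $\{p_i\}$, and the resolved morphism $\widetilde{P_i'}$ on $\widetilde{\p3}_P$ is simply $H_i\circ\widetilde{P_i}$, since $H_i$ is a morphism $\p2\to\p2$. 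In particular the blow-up $\widetilde{\p3}_{P'}$ equals $\widetilde{\p3}_P$, so the point set $X$ still makes sense as a subset of the domain, and $\widetilde{\phi_{\textbf{P}'}}=(H_1\times\cdots\times H_n)\circ\widetilde{\phi_\textbf{P}}$.

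Next I would transport a given conjugate. Let $(\textbf{Q},Y)$ be a conjugate of $(\textbf{P},X)$, with camera centers $q_1,\dots,q_n$, and set $Q_i':=H_iQ_i$. Left multiplication by an invertible matrix does not change the kernel, so $Q_i'$ has center $q_i$, hence $\widetilde{\p3}_{Q'}=\widetilde{\p3}_Q$, and as above $\widetilde{\phi_{\textbf{Q}'}}=(H_1\times\cdots\times H_n)\circ\widetilde{\phi_\textbf{Q}}$. Applying the common automorphism $H_1\times\cdots\times H_n$ of $(\p2)^n$ to the equality $\widetilde{\phi_\textbf{P}}(X)=\widetilde{\phi_\textbf{Q}}(Y)$ yields $\widetilde{\phi_{\textbf{P}'}}(X)=\widetilde{\phi_{\textbf{Q}'}}(Y)$, so $(\textbf{P}',X)$ and $(\textbf{Q}',Y)$ produce the same images.

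It then remains to verify that $(\textbf{Q}',Y)$ is genuinely a conjugate of $(\textbf{P}',X)$, i.e. that the two are not equivalent in the sense of \cref{def:equvalent_configurations}; otherwise $(\textbf{P}',X)$ would fail to be critical. Suppose for contradiction that some $A\in\PGL(4)$ realizes the equivalence, so that $A(\overline X)=\overline Y$, $A(p_i)=q_i$, and $P_i'(x)=Q_i'(A(x))$ for all $i$ and all $x\neq p_i$. Cancelling the invertible $H_i$ on the left of $P_i'(x)=H_iP_i(x)$ and $Q_i'(A(x))=H_iQ_i(A(x))$ gives $P_i(x)=Q_i(A(x))$ for all $i$ and all $x\neq p_i$; together with $A(\overline X)=\overline Y$ and $A(p_i)=q_i$ this says exactly that the same $A$ witnesses an equivalence between $(\textbf{P},X)$ and $(\textbf{Q},Y)$, contradicting that they are conjugate. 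Hence $(\textbf{P}',X)$ admits a conjugate and is therefore critical. I do not expect a serious obstacle here; the only point needing care is precisely this last bookkeeping — checking that post-composing with the $H_i$ does not collapse the two reconstructions into the same $\PGL(4)$-orbit — which is handled by observing that the witnessing automorphism $A$ lives in $\p3$ and is unaffected by the image-coordinate changes $H_i$.
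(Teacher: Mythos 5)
Your proof follows essentially the same route as the paper's: write $P_i'=H_iP_i$ for some $H_i\in\PGL(3)$ and transport the conjugate $(\textbf{Q},Y)$ to $(H_1Q_1,\ldots,H_nQ_n,Y)$. The only difference is that you explicitly verify the transported pair is not $\PGL(4)$-equivalent (which the paper leaves implicit), a worthwhile detail since conjugacy requires nonequivalence.
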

\begin{proof}
Since $P_i$ and $P_i'$ share the same camera center and the camera center determines the map uniquely up to a choice of coordinates, there exists some $H_i\in\PGL(3)$ such that $P_i'=H_iP_i$. Let $(Q_1,\ldots,Q_n,Y)$ be a conjugate to $(P_1,\ldots,P_n,X)$. Then $(H_1Q_1,\ldots,H_nQ_n,Y)$ is a conjugate to $(H_1P_1,\ldots,H_nP_n,X)=(P_1',\ldots,P_n',X)$, so this configuration is critical as well.
\end{proof}

\section{The one-view case}
\label{sec:one_view_case}
Reconstruction of a 3D-scene from the image of one projective camera is generally considered impossible, so most papers start with the two-view case. Still, for the sake of completeness, we give a summary of the critical configurations for one camera. 

Let $P$ be a camera, and let $p$ be its camera center, we then have the joint camera map:
\begin{align*}
\widetilde{\phi_P}\from\widetilde{\p3}\to\p2
\end{align*}
For any point $x\in\p2$, the fiber over $x$ is a line through $p$, so no point can be uniquely recovered. From this, one might assume that every configuration with one camera is critical. However, this is only the case if our configuration consists of sufficiently many points.

\begin{theorem}
A configuration of one point and one camera is never critical. A configuration of one camera and $n>1$ points is critical if and only if the camera center along with the $n$ points span a space of dimension less than $n$.
\end{theorem}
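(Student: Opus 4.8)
The plan is to analyze the fibers of the joint camera map directly. Fix a camera $P$ with center $p$, and recall that for any image point $z \in \p2$ the fiber $\widetilde{\phi_P}^{-1}(z)$ is (the strict transform of) the line through $p$ mapping to $z$. Thus, given a set $X \subset \widetilde{\p3}$ of $n$ points, a conjugate configuration $(Q, Y)$ with the same images amounts to choosing, for each $x_i \in X$, a point $y_i$ on the line $\ell_i$ through the center $q$ of $Q$ corresponding to the image $P(x_i)$; the configuration is critical precisely when such a choice can be made so that $(Q, Y)$ is \emph{not} $\PGL(4)$-equivalent to $(P, X)$. So the whole question reduces to: when is the assignment $x_i \mapsto P(x_i)$ rigid, in the sense that any two cameras $P, Q$ and points $X, Y$ realizing the same $n$ image points must differ by a projective transformation of $\p3$?

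First I would dispose of the $n=1$ case: with a single point $x$ and its image $P(x)$, pick any second camera $Q$ (with center $q \neq p$, say) and let $y$ be any point on the line through $q$ over the image point. The configuration $(P,\{x\})$ is then carried to $(Q,\{y\})$ by \emph{some} element of $\PGL(4)$ — concretely, three non-collinear points plus the two lines $\overline{px}$, $\overline{qy}$ impose only finitely many linear conditions, and $\PGL(4)$ is $5$-transitive enough on such data (a generic projectivity sending $p \mapsto q$, $x \mapsto y$ exists because these are just two point-conditions). By \Cref{prop:only_camera_centers_matter} the choice of $Q$ within its center's orbit is immaterial, so $(P,\{x\})$ has no conjugate and is not critical. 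The same counting argument handles the general threshold: if the center $p$ together with $x_1,\dots,x_n$ spans all of $\p3$ (equivalently spans a space of dimension $\geq \min(n,3)$ — but the clean statement is dimension $\geq n$ when $n \le 4$, and for $n \ge 4$ the span is automatically $\p3$ so the condition $<n$ forces a lower-dimensional span), then these $n+1$ points are in "general enough" position that any projectivity matching the centers and preserving all the viewing lines is forced to be the identity on them, hence on their span, which is everything; so no non-equivalent conjugate exists.

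Conversely, when $p, x_1, \dots, x_n$ span a space $\Pi$ of dimension $< n$, I would exhibit a conjugate explicitly. Since $\dim \Pi < n$ there is a nontrivial linear relation among the $x_i$ and $p$ in $\Pi$; geometrically the points are in \emph{special} position, so one can move them along their viewing lines through $p$ (or replace $p$ by a new center $q$ and the $x_i$ by points on the corresponding lines) in a way that changes the cross-ratios / spanning pattern, producing a configuration $(Q,Y)$ with the same images but not projectively equivalent to $(P,X)$ — the point being that $\PGL(4)$ acting on $n+1$ points that span only a $({<}n)$-dimensional subspace has positive-dimensional stabilizer behaviour / too few invariants to pin the configuration down, whereas the image data (which records the viewing lines but not the positions along them) leaves a genuine degree of freedom. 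Making this precise is the crux: I would set it up by fixing coordinates so that $\Pi = \{x_0 = \dots = x_{d} \text{ free}, \text{rest } =0\}$ with $d < n$, write $X$ explicitly, and display a one-parameter family $Y_t$ on the viewing lines with $Y_0 = X$ but $Y_t$ not in the $\PGL(4)$-orbit of $X$ for $t \neq 0$, using an invariant (e.g. a determinant / cross-ratio built from $n$ of the points and the center) that distinguishes them.

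The main obstacle is the converse direction — producing the conjugate and \emph{certifying non-equivalence}. It is easy to move points along viewing lines; the work is in verifying that the resulting configuration genuinely lies in a different $\PGL(4)$-orbit (the subtle point flagged after \Cref{prop:critical_points_give_critical_configuration}, that removing or specializing points can collapse orbits). I expect to handle this by exhibiting a projective invariant of the $(n+1)$-point configuration $\{p\} \cup X$ that (i) is determined once the images and one camera are fixed only up to the freedom along the lines, and (ii) actually varies in the family $Y_t$; the dimension count "$\dim \Pi < n$" is exactly the condition guaranteeing such an invariant has room to move, while "$\dim \Pi \ge n$" rigidifies it.
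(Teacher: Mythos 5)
Your reduction of the problem to choices of points along viewing lines is sound, and your treatment of the $n=1$ case matches the paper's (one point plus one center is a single $\PGL(4)$-orbit). But the converse direction --- producing a conjugate and certifying non-equivalence --- is exactly where you stop: you name it as ``the crux,'' defer it to an unspecified ``invariant (e.g.\ a determinant / cross-ratio),'' and never exhibit either the conjugate or the invariant. The paper resolves this with a much simpler observation that your sketch misses: the certifying invariant is just the \emph{dimension of the span} of the reconstructed points (equivalently, whether the camera center lies in that span). If $p$ together with $X$ spans a space of dimension $<n$, the image points span a space of dimension $m<n-1$, and one can build two reconstructions of the same images --- one in which the space points span an $m$-plane not containing the center, and one in which they span an $(m+1)$-plane through the center. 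These differ in a manifestly projective invariant, so they are non-equivalent, and no one-parameter family or cross-ratio analysis is needed. Without identifying such an invariant your argument for criticality is only a plan, not a proof.

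Two smaller issues. In the non-critical direction your phrase ``any projectivity matching the centers and preserving all the viewing lines is forced to be the identity'' argues triviality of a stabilizer, which is not the statement needed; what is needed is the \emph{existence} of a projectivity carrying one reconstruction to another, which follows from transitivity of $\PGL(4)$ on at most five points in general position together with the (easy but necessary) check that every reconstruction of such images is automatically again in general position --- if $q,y_1,\dots,y_n$ were degenerate, the image points would span too small a space. Finally, your parenthetical about $n\ge 4$ is backwards: for $n\ge 4$ the span of $n+1$ points in $\p3$ has dimension at most $3<n$, so the criterion is always met and every such configuration is critical; the condition does not ``force a lower-dimensional span.''
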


\begin{proof}
For the first part, note that up to a projective transformation, there exists only one configuration of one point and one camera. In other words, any configuration of one point and one camera can be taken to any other such configuration by simply changing coordinates. By \cref{def:equvalent_configurations} this makes them equivalent, which means that only one reconstruction exists. 

The same turns out to be the case if the configuration is such that the camera center along with the $n>1$ points span a space of dimension $n$. In $\p3$, one can never span a space of dimension greater than 3, so this implies that $n\leq3$. Furthermore, if the $n$ points along with the camera center span a space of dimension $n$,  then the points and camera center lie in general position. However, for $n\leq4$ points (fewer than 3 points + one camera center) there exists only one configuration (up to action with $\PGL(4)$) where all points are in general position. This means (by \cref{def:equvalent_configurations}) that all reconstructions are equivalent.

Now it only remains to show that a configuration is critical if the camera center along with the $n>1$ points span a space of dimension less than $n$. Indeed, if the points along with the camera center span a space of dimension less than $n$, the image points span a space of dimension $m<n-1$. Then there are at least two nonequivalent reconstructions; one reconstruction where the points span a space of dimension $m$ not containing the camera center and one where they span a space of dimension $m+1$ which contains the camera center. 
\end{proof}

\section{The two-view case}
\label{sec:preliminary_two_views}

\subsection{The multi-view ideal}
We start the study of the case of two cameras $P_1$ and $P_2$ by understanding their multi-view variety. We assume, here and throughout the rest of the paper, that all cameras have distinct centers. The two cameras define the joint camera map:
\begin{align*}
\widetilde{\phi_P}\from \widetilde{\p3}\to\pxp.
\end{align*}

\begin{proposition}
\label{lem:joint-camera_map_is_isomorphism}
For two cameras $(P_1,P_2)$, the joint camera map $\widetilde{\phi_P}$ takes the line spanned by the two camera centers to a point, and is an embedding everywhere else.
\end{proposition}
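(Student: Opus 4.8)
The plan is to understand $\widetilde{\phi_P}$ through its fibres and its differential. Write $L=\overline{p_1p_2}$, let $\widetilde L\subset\widetilde{\p3}$ be its strict transform, let $E_1,E_2$ be the exceptional divisors over $p_1,p_2$, and set $e_1=P_1(p_2)$ and $e_2=P_2(p_1)$. The starting point is a handful of facts about the cameras on the exceptional divisors. Since the two centers are distinct, $P_2$ is already a morphism in a neighbourhood of $p_1$, so $P_2\circ\pi=P_2$ near $E_1$ and hence $P_2$ is constant equal to $e_2$ on all of $E_1$; on the other hand $P_1$ restricts to the standard isomorphism $E_1\from\p2$ coming from blowing up a point and projecting away from it (blowing up $p_2$ does not affect a neighbourhood of $E_1$). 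Symmetrically on $E_2$. In particular $P_1\equiv e_1$ and $P_2\equiv e_2$ along $\widetilde L$, so $\widetilde{\phi_P}(\widetilde L)$ is the single point $b:=(e_1,e_2)$, which already proves the contraction claim.

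Next I would compute every fibre $\widetilde{\phi_P}^{-1}(q_1,q_2)=P_1^{-1}(q_1)\cap P_2^{-1}(q_2)$. By the facts above, $P_1^{-1}(q_1)$ is the strict transform of a line through $p_1$ (a $\p1$ meeting $E_1$ once and disjoint from $E_2$) when $q_1\neq e_1$, and equals $\widetilde L\cup E_2$ when $q_1=e_1$; symmetrically for $P_2^{-1}(q_2)$. Intersecting the four resulting cases, using that $E_1\cap E_2=\emptyset$ and that the strict transforms of two distinct lines of $\p3$ meet only if the lines already meet in $\p3\setminus\{p_1,p_2\}$, one finds that the fibre over $b$ is exactly $\widetilde L$, while over every other point of $\mathrm{MV}:=\im\widetilde{\phi_P}$ the fibre is a single point of $\widetilde{\p3}\setminus\widetilde L$. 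This gives injectivity of $\widetilde{\phi_P}$ on $\widetilde{\p3}\setminus\widetilde L$ and identifies $\widetilde{\p3}\setminus\widetilde L$ with $\widetilde{\phi_P}^{-1}(\mathrm{MV}\setminus\{b\})$, with image $\mathrm{MV}\setminus\{b\}$.

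It then remains to see that $\widetilde{\phi_P}$ is unramified along $\widetilde{\p3}\setminus\widetilde L$, which I would check at three kinds of points. If $x\in\p3\setminus L$ the map is just $(P_1,P_2)$, and $\ker dP_i|_x$ is the tangent direction at $x$ to the fibre line $\overline{xp_i}$; these two directions are independent because $x\notin L$, so $d\widetilde{\phi_P}|_x$ is injective. If $x\in E_1\setminus\widetilde L$, then $P_1|_{E_1}$ being an isomorphism forces $T_x\widetilde{\p3}=\ker dP_1|_x\oplus T_xE_1$; since $P_2|_{E_1}$ is constant, $dP_2|_x$ kills $T_xE_1$, while on the transverse line $\ker dP_1|_x$ one has $dP_2|_x=dP_2|_{p_1}\circ d\pi|_x$, and $d\pi|_x$ carries this line to the direction in $T_{p_1}\p3$ represented by $x$, which is not the direction $\ker dP_2|_{p_1}$ of $L$ because $x\notin\widetilde L$; hence $dP_2|_x$ is injective on $\ker dP_1|_x$ and $d\widetilde{\phi_P}|_x$ is injective. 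The case $x\in E_2\setminus\widetilde L$ is symmetric. Finally, $\widetilde{\phi_P}$ is proper since $\widetilde{\p3}$ is projective, so its restriction to $\widetilde{\p3}\setminus\widetilde L$ is proper over $\mathrm{MV}\setminus\{b\}$; a proper, injective, unramified morphism of complex varieties is an isomorphism onto its image (e.g.\ it is finite, and being in addition bijective and unramified it makes $\mathcal O_{\mathrm{MV}\setminus\{b\}}\to(\widetilde{\phi_P})_*\mathcal O$ surjective by Nakayama), so $\widetilde{\phi_P}$ restricts to an isomorphism $\widetilde{\p3}\setminus\widetilde L\to\mathrm{MV}\setminus\{b\}$ and is an embedding into $\pxp$ there.

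I expect the main obstacle to be the differential computation on the exceptional divisors: one must be careful about which tangent vectors survive under $d\pi$ and must correctly identify $\ker dP_1|_x$ with the direction transverse to $E_1$ at $x$. Away from the exceptional locus, as well as the fibre bookkeeping and the final properness argument, everything is routine.
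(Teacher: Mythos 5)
Your proposal is correct, and it carries the argument further than the paper's own proof does. The paper's proof consists solely of your fibre computation: $\widetilde{\phi_P}^{-1}(x_1,x_2)=P_1^{-1}(x_1)\cap P_2^{-1}(x_2)$ is the intersection of two lines through the respective camera centers, hence a single point unless both coincide with the line $L$ spanned by the centers; it stops there. That establishes the contraction of $\widetilde L$ and set-theoretic injectivity elsewhere, but not that the map is an immersion. Your two additional steps are precisely what is needed to justify the word ``embedding'': (i) the differential check, which is routine on $\p3\setminus L$ and correctly handled on $E_i\setminus\widetilde L$, where the key point is that $d\pi$ carries the transverse line $\ker dP_i|_x$ to the tangent direction of $\p3$ at $p_i$ represented by $x$, and this direction avoids $\ker dP_j|_{p_i}$ exactly because $x\notin\widetilde L$; and (ii) the properness argument upgrading an injective unramified morphism to a closed immersion onto $\mathrm{MV}\setminus\{b\}$, using that properness is preserved under restriction to the preimage of the open set $(\p2\times\p2)\setminus\{b\}$, which you correctly identify as $\widetilde{\p3}\setminus\widetilde L$. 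Your case analysis of the four fibre types (including $P_1^{-1}(e_1)=\widetilde L\cup E_2$ and the fibre over $b$ being exactly $\widetilde L$) is also more careful than the published version, which does not discuss the exceptional divisors at all. In short: same core idea as the paper, executed at a level of rigour the paper omits; no gaps.
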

\begin{proof}
For $x=(x_1,x_2)\in\pxp$, the preimage of $x$ is given by
\begin{align*}
\widetilde{\phi_P}^{-1}(x)=l_1\cap l_2,
\end{align*}
where $l_i$ is the line $P_i^{-1}(x_1)$. The line $l_i$ passes through the camera center $p_i$, so the intersection of the two lines is a single point unless they are both equal to the line spanned by the camera centers.
\end{proof}

This means that the multi-view variety $\im(\widetilde{\phi_P})$ is an irreducible singular $3$-fold in $\pxp$. It is described by a single bilinear polynomial $F_P$, which we call \emph{the fundamental form} of $P_1$, $P_2$.

The fundamental form is well-studied in the literature and is often represented by a $3\times3$ matrix of rank $2$ called the \emph{fundamental matrix}. See \cite[section~9.2]{hartleyzisserman} for a geometric construction of the fundamental matrix. We use the construction in \cite{Tomas} where the fundamental form is given as the determinant of a $6\times6$ matrix:

\subsection{The multi-view variety}
\begin{proposition}[The fundamental form]{\cite[Sections 9.2 and 17.1]{hartleyzisserman}}
\label{lem:fundamental_form}
For two cameras $P_1,P_2$, the multi-view variety $\im(\widetilde{\phi_\textbf{P}})\subset\pxp$ is the vanishing locus of a single, bilinear, rank 2 form $F_P$, called the fundamental form (or fundamental matrix).
\begin{align*}
F_P(\textbf{x},\textbf{y})=\det\begin{bmatrix}
P_i&\textbf{x}&\textbf{0}\\
P_j&\textbf{0}&\textbf{y}
\end{bmatrix},
\end{align*}
where $\textbf{x}$ and $\textbf{y}$ are the variables in the first and second image respectively.
\end{proposition}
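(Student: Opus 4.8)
### Proof Proposal

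The plan is to show that the bilinear form $F_P(\mathbf{x},\mathbf{y})$ defined by the $6\times 6$ determinant vanishes precisely on the image of $\widetilde{\phi_P}$, and then that it is the \emph{only} equation needed (i.e. it generates the multi-view ideal), and finally that it has rank $2$. First I would establish the set-theoretic claim: a pair $(\mathbf{x},\mathbf{y})\in\pxp$ lies in $\im(\widetilde{\phi_P})$ if and only if the lines $l_1=P_1^{-1}(\mathbf{x})$ and $l_2=P_2^{-1}(\mathbf{y})$ meet in $\p3$ (this is exactly the fiber description from \cref{lem:joint-camera_map_is_isomorphism}, keeping in mind that when both lines coincide with the baseline the corresponding point in $\widetilde{\p3}$ still exists). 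The block matrix $\begin{bmatrix} P_1&\mathbf{x}&\mathbf{0}\\ P_2&\mathbf{0}&\mathbf{y}\end{bmatrix}$ is $6\times 8$; the point is that its kernel being nontrivial in the relevant way encodes the existence of a common preimage. Concretely, a nonzero $(X,-\lambda,-\mu)$ in the kernel means $P_1 X=\lambda\mathbf{x}$ and $P_2 X=\mu\mathbf{y}$, i.e. $X\in\p3$ maps to $(\mathbf{x},\mathbf{y})$. So I would argue that $(\mathbf{x},\mathbf{y})\in\im(\widetilde{\phi_P})$ iff this $6\times 8$ matrix has rank $\le 5$, and then observe that generically (for fixed generic $\mathbf{x},\mathbf{y}$) the $6\times 6$ minor obtained by the displayed bracket already drops rank exactly on this locus — so $F_P$ cuts out the image set-theoretically. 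One must handle carefully the degenerate locus where $\mathbf{x}$ or $\mathbf{y}$ is such that extra minors vanish, but a dimension count (the image is an irreducible $3$-fold, $V(F_P)$ is a hypersurface, hence $3$-dimensional, and $F_P$ is irreducible because the image is irreducible and not contained in a smaller-degree hypersurface) pins it down.

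Next I would verify bilinearity and the rank statement by direct expansion. Expanding the determinant along the last two columns (which contain $\mathbf{x}$ and $\mathbf{y}$ and a lot of zeros) shows it is linear in the entries of $\mathbf{x}$ and linear in the entries of $\mathbf{y}$, so $F_P(\mathbf{x},\mathbf{y})=\mathbf{x}^\top F\,\mathbf{y}$ for a $3\times 3$ matrix $F$ whose entries are signed $4\times 4$ minors built from the rows of $P_1$ and $P_2$. For the rank: the kernel of $F$ (as a map on $\mathbf{y}$) corresponds to image points $\mathbf{y}$ in the second view for which the constraint degenerates — geometrically this is the epipole, the image under $P_2$ of the first camera center $p_1$; similarly on the left. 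Since $p_1$ is a single point, the epipole is a single point, so $\ker F$ is one-dimensional, giving $\operatorname{rank} F = 2$. I would make this precise by substituting $\mathbf{y}=P_2(p_1)$ (equivalently noting $p_1\in\ker P_1$ forces the first three columns to have a dependency) and checking the determinant vanishes identically in $\mathbf{x}$, and conversely that rank cannot drop to $1$ because the two cameras have distinct centers so the epipolar line map is nondegenerate.

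Finally, for the ideal-theoretic statement I would note that $\im(\widetilde{\phi_P})$ is an irreducible hypersurface (codimension $1$ in the $3$-fold $\pxp$... wait, $\pxp$ is $4$-dimensional and the image is $3$-dimensional, so it is indeed a hypersurface), hence its ideal is principal, generated by the irreducible polynomial cutting it out; since $F_P$ is irreducible (its zero locus is the irreducible image) and cuts out the image set-theoretically, it must be — up to scalar — the generator. Irreducibility of $F_P$ I would get from irreducibility of the image together with the fact that $F_P$ has bidegree $(1,1)$: the only way a $(1,1)$ form factors is into a $(1,0)$ and a $(0,1)$ form, which would force the image to be a union of two "fibers" of the projections, contradicting that it surjects onto each $\p2$ factor.

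The main obstacle I anticipate is the careful handling of the degenerate locus in the set-theoretic step: showing that the single $6\times 6$ minor $F_P$ (rather than the full rank condition on the $6\times 8$ matrix) really does capture \emph{all} of the image and nothing more, especially at points lying on the exceptional divisors / the contracted baseline. The clean way around this is to lean on \cref{lem:joint-camera_map_is_isomorphism} for the precise fiber structure and on irreducibility plus dimension count to upgrade "$F_P$ vanishes on the image and the image is a hypersurface" to "$V(F_P)$ equals the image" — avoiding a case-by-case minor analysis altogether. Most of the computation (that the $4\times 4$ minors assemble into the classical fundamental matrix, bilinearity) is routine and I would only sketch it, citing \cite{Tomas} and \cite[Sections 9.2 and 17.1]{hartleyzisserman} for the standard facts.
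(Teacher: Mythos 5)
Your proposal is correct and follows essentially the same route as the paper: a nonzero kernel vector $(\textbf{X},-\lambda,-\mu)$ shows $F_P$ vanishes on the image, and since the image is an irreducible $3$-fold in the $4$-dimensional $\pxp$ its ideal is principal, so the bidegree-$(1,1)$ irreducibility argument pins down $F_P$ as the generator (the paper leaves that last step and the rank-$2$ verification implicit, handling the rank in the text following the proposition via the epipoles). One correction: the block matrix is $6\times 6$, not $6\times 8$ (each block row $[P_i\;\textbf{x}\;\textbf{0}]$ has $4+1+1=6$ columns), so the determinant condition is exactly the rank-deficiency condition and the ``main obstacle'' you anticipate --- choosing the right $6\times 6$ minor and controlling a degenerate locus where other minors vanish --- does not arise; the apparent extra vanishing at, e.g., $\textbf{x}=e_{P_1}^{2}$ with $\textbf{y}$ arbitrary is genuinely part of the image (the exceptional divisor over $p_2$ maps onto $\{e_{P_1}^{2}\}\times\p2$) and is precisely what makes the form rank $2$.
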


\begin{proof}
By \cref{lem:joint-camera_map_is_isomorphism}, the multi-view variety for two cameras is an irreducible, 3-fold. It follows that the multi-view ideal is generated by a single polynomial. Let $(\textbf{x},\textbf{y})$ be a generic point in the multi-view variety, then there exists a point $\textbf{X}\in\p3$ such that $P_1(\textbf{X})=\lambda_1\textbf{x}$ and $P_2(\textbf{X})=\lambda_2\textbf{y}$, then
\begin{align*}
\begin{bmatrix}
P_i&\textbf{x}&\textbf{0}\\
P_j&\textbf{0}&\textbf{y}
\end{bmatrix}\begin{bmatrix}
\textbf{X}\\
-\lambda_1\\
-\lambda_2
\end{bmatrix}=0.
\end{align*}
Since the matrix has a non-zero kernel, the determinant $F_P$ has to vanish on $(\textbf{x},\textbf{y})$. Now we need only show that the determinant is irreducible to prove that $F_P(\textbf{x},\textbf{y})$ generates the multi-view ideal. Irreducibility follows from the fact that the polynomial is of rank 2 (a reducible polynomial is always of rank 1) which in turn follows from the fact that $F_P$ satisfies
\begin{align}
\label{eq:epipole_is_kernel}
F_P(e_{P_1}^{2},-)=F_P(-,e_{P_2}^{1})=0,
\end{align}
where
\begin{align}
\label{eq:definition_of_epipole}
e_{P_i}^{j}=P_i(p_j).
\end{align}
\end{proof}

\begin{remark} 
Recall that the entries in the camera matrix are real, this means that the fundamental form always has real coefficients. 
\end{remark}

\begin{definition}
\label{def:epipole}
The \emph{epipoles} $e_{P_i}^{j}$ are the image points we get by mapping the $j$-th camera center to the $i$-th image
\begin{align*}
e_{P_i}^{j}=P_i(p_j).
\end{align*}
\end{definition}

The fundamental form satisfies
\begin{align*}
F_P(e_{P_1}^{2},-)=F_P(-,e_{P_2}^{1})=0,
\end{align*}
in other words, it vanishes in either epipole. This means that the fundamental form is of rank 2 (also follows from $\im(\widetilde{\phi_P})$ being singular), so for each pair of cameras, we get a bilinear form of rank 2. The following result states that the converse is also true, i.e. that any bilinear form of rank 2 is the fundamental form for some pair of cameras. 

\begin{theorem}
\label{thr:fundforms_and_camera_pairs_are_1:1}
There is a $1:1$ correspondence between bilinear forms of rank two, and pairs of cameras $P_1,P_2$ (up to action by $\PGL(4)$)
\end{theorem}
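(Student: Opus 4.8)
The plan is to read the statement as saying that $(P_1,P_2)\mapsto F_P$ induces a bijection between $\PGL(4)$-orbits of camera pairs with distinct centers and nonzero rank-$2$ bilinear forms, the latter taken up to a nonzero scalar. The first step is to check that the assignment descends to orbits: for $H\in\PGL(4)$, the $6\times6$ matrix of \cref{lem:fundamental_form} for $(P_1H,P_2H)$ is obtained from the one for $(P_1,P_2)$ by right multiplication with the invertible matrix that is block-diagonal with blocks $H,1,1$, so $F_{(P_1H,P_2H)}=(\det H)\,F_P$; since $\det H\neq0$ this represents the same form projectively. Combined with the discussion preceding the theorem, which shows $F_P$ is a nonzero rank-$2$ form with real coefficients, we get a well-defined map $\Phi$ from orbits of camera pairs to real rank-$2$ forms up to scalar, and it remains to prove $\Phi$ surjective and injective.

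For surjectivity I would transport a reference pair rather than write one by hand. Fix any camera pair $(P_1^{0},P_2^{0})$ with distinct centers and let $M_0$ be its fundamental matrix, which is nonzero of rank $2$. Given a target real $3\times3$ matrix $A$ of rank $2$, pick $B_1,B_2\in GL_3(\mathbb{R})$ with $B_1^{\top}AB_2=M_0$; such $B_1,B_2$ exist because $A$ and $M_0$, having the same rank, lie in the same orbit of the action $(B_1,B_2)\cdot M=B_1^{\top}MB_2$ of $GL_3\times GL_3$. A short computation with the determinant formula (pull $\mathrm{diag}(B_1,B_2)$ out on the left and absorb $B_1^{-1},B_2^{-1}$ into the columns carrying $\textbf{x}$ and $\textbf{y}$) shows that the camera pair $(B_1P_1^{0},B_2P_2^{0})$ — which still has distinct centers, since left multiplication by an invertible matrix does not change a camera's kernel — has fundamental matrix a nonzero multiple of $A$. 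Hence $\Phi$ is surjective, and it lands in the real forms because the $B_i$ and $P_i^{0}$ are real.

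Injectivity is where the content sits, and I expect it to be the main obstacle. Suppose $(P_1,P_2)$ and $(Q_1,Q_2)$ have proportional fundamental forms. A full-rank $3\times4$ matrix can be carried to $[\,I_3\mid0\,]$ by an element of $\PGL(4)$, so after acting on each configuration I may assume $P_1=Q_1=[\,I_3\mid0\,]$; the residual freedom is then the stabilizer of $[\,I_3\mid0\,]$ in $\PGL(4)$, namely the matrices $\left(\begin{smallmatrix}\alpha I_3&0\\v^{\top}&\mu\end{smallmatrix}\right)$ with $\alpha\mu\neq0$. At this point I invoke the two-view projective reconstruction theorem: once the first camera is normalized to $[\,I_3\mid0\,]$, the fundamental form determines the second camera up to exactly this stabilizer — equivalently, every camera pair $\bigl([\,I_3\mid0\,],[\,M\mid t\,]\bigr)$ whose fundamental matrix is proportional to a prescribed rank-$2$ matrix $A$ has $[\,M\mid t\,]$ proportional to $\bigl[\,[e']_{\times}A+e'v^{\top}\mid e'\,\bigr]$ for some vector $v$, where $e'$ is the relevant null vector of $A$ (the epipole in the second image) and $[e']_{\times}$ is the cross-product matrix (see \cite[Chapter~9]{hartleyzisserman}). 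Hence $P_2$ and $Q_2$ differ by an element of the stabilizer, so $(P_1,P_2)$ and $(Q_1,Q_2)$ lie in one $\PGL(4)$-orbit, and $\Phi$ is injective.

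The one genuinely delicate point is this last input. Making it self-contained amounts to showing that if $[t']_{\times}M'$ and $[t]_{\times}M$ are proportional then $(M',t')$ lies in the stabilizer-orbit of $(M,t)$; this is a finite bookkeeping of kernels and ranks of $3\times3$ matrices which, exactly as in the surjectivity step, uses the reality of the epipoles to avoid the isotropic-vector degeneracies that can occur over $\mathbb{C}$. A more conceptual alternative, closer in spirit to the rest of the paper, is to argue from \cref{lem:joint-camera_map_is_isomorphism}: a rank-$2$ form $F$ determines its multi-view variety $V(F)\subset\pxp$, which has a single ordinary double point and hence exactly two small resolutions; each camera pair with fundamental form $F$ realizes $\widetilde{\p3}$ as one of these two resolutions with the cameras recovered as the two compositions $\widetilde{\p3}\to V(F)\hookrightarrow\pxp\to\p2$, so it suffices to check that the flop relating the two resolutions is induced by an element of $\PGL(4)$. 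For brevity I would take the first route and cite \cite{hartleyzisserman}.
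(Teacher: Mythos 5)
Your proof is correct and follows essentially the same route as the paper, whose entire argument is to note that $F_P$ is a real rank-2 form (the forward direction) and to cite Theorem 9.13 of \cite{hartleyzisserman} for the converse, i.e.\ for the existence and uniqueness up to $\PGL(4)$ of a camera pair realizing a given rank-2 form. You spell out the well-definedness and surjectivity steps explicitly (via the determinant computation and the $GL_3\times GL_3$ orbit argument), which the paper leaves inside the citation, but the decisive injectivity step rests on the same reconstruction theorem from \cite{hartleyzisserman} in both arguments.
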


\begin{proof}
By \cref{lem:fundamental_form}, the fundamental form of two cameras is a real bilinear form of rank 2. The converse follows from Theorem 9.13. in \cite{hartleyzisserman}.
\end{proof}

With these results, we can move on to classifying all the critical configurations for two views. We start with a special type of critical configuration:

\subsection{Trivial critical configurations}

\begin{definition}
\label{def_non_trivial_configuration}
A configuration $(P_1,P_2,X)$ is said to be a \emph{non-trivial critical configuration} if it has a conjugate configuration $(Q_1,Q_2,Y)$ satisfying
\begin{align*}
F_P\neq F_Q.
\end{align*}
\end{definition}

Critical configurations not satisfying this property exist, they are called trivial. If $(P_1,P_2,X)$ is a trivial critical configuration, then all its conjugates $(Q_1,Q_2,Y)$ have the same fundamental form as the cameras $P_1,P_2$. By \cref{thr:fundforms_and_camera_pairs_are_1:1} this means that, after a change of coordinates, $Q_1=P_1$ and $Q_2=P_2$. Since the cameras are the same, \cref{lem:joint-camera_map_is_isomorphism} tells us that the sets $X$ and $Y$ are equal, with the exception of any point lying on the line spanned by the two camera centers. It is a well-known fact that no number of cameras can differentiate between points lying on a line containing all the camera centers, hence the name \enquote{trivial}.

The focus of this paper is the non-trivial critical configurations. A classification of the trivial critical configurations for any number of views can be found in \cite{HK} Section 4, or in \cite{hartleyzisserman} Chapter 22.

\subsection{Critical configurations for two views}
\label{subsec:crit_for_two_views}
Let us consider a non-trivial critical configuration $(P_1,P_2,X)$. Since it is critical, there exists a conjugate configuration $(Q_1,Q_2,Y)$ giving the same images in $\pxp$. The two sets of cameras define two joint-camera maps $\widetilde{\phi_P}$ and $\widetilde{\phi_Q}$.
\begin{center}
\begin{tikzcd}[ampersand replacement=\&, column sep=small]
\widetilde{\p3} \arrow[rd, "\widetilde{\phi_P}"] \&   \& \widetilde{\p3} \arrow[ld, "\widetilde{\phi_Q}"'] \\
                          \& \pxp \&                           
\end{tikzcd}
\end{center}

Now, since the configuration is critical, we have that $\widetilde{\phi_P}(X)=\widetilde{\phi_Q}(Y)$. As such, the two sets $X$ and $Y$ both map (with their respective maps) into the intersection of the two multi-view varieties $\im(\widetilde{\phi_{P}})\cap \im(\widetilde{\phi_{Q}})$. Taking the preimage of this intersection under $\widetilde{\phi_{P}}$, we get a variety $S$ in $\widetilde{\p3}$ which needs to contain all the points in $X$. Moreover, if $(P_1,P_2,X)$ is maximal, then $X=S$. As such, classifying all non-trivial maximal critical configurations can be done by classifying all possible intersections between two multi-view varieties, and then examining what these intersections pull back to in $\widetilde{\p3}$. This is made even simpler by the fact that the pullback of $\im(\widetilde{\phi_{P}})\cap\im(\widetilde{\phi_{Q}})$ is just the variety we get by pulling back the fundamental form $F_Q$ (the fundamental form $F_P$ pulls back to the zero polynomial). 

The pullback of a bilinear form describes the strict (or proper) transform of a quadric surface\footnote{This is a surface over the complex numbers. The real points on this surface will generally also form a surface, but we will later see that there is one exception, namely when the surface is the union of two complex conjugate planes, and only their line of intersection is real}. It follows that $X$ and $Y$ lie on the strict transform of two quadric surfaces (quadrics) which we denote by $S_P$ and $S_Q$ respectively. Let $\widetilde{S_P}$ and $\widetilde{S_Q}$ denote their strict transforms. These quadrics are given by the following equations:
\begin{align}
\label{eq:SP_and_SQ}
\begin{split}
S_P(x)&=F_Q(P_1(x),P_2(x)),\\
S_Q(x)&=F_P(Q_1(x),Q_2(x)).
\end{split}
\end{align}
The two quadrics have the following properties:
\begin{lemma}[{Lemma 5.10 in \cite{HK}}]
\label{lem:properties_of_the_quadrics}
\begin{enumerate}
\item The quadric $S_P$ contains the camera centers $p_1$, $p_2$.
\item The quadric $S_P$ is ruled (contains real lines).
\end{enumerate}
\end{lemma}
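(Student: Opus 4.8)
The plan is to verify both properties directly from the defining equation $S_P(x) = F_Q(P_1(x), P_2(x))$ in \eqref{eq:SP_and_SQ}, using only what we know about the fundamental form and the epipoles.

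For part (1), I would evaluate $S_P$ at the camera center $p_1$. Since $P_1$ is the blow-down of a morphism whose exceptional divisor sits over $p_1$, the honest thing to say is that on $\p3$ the rational map $P_1$ is undefined at $p_1$, but $P_2$ sends $p_1$ to the epipole $e^1_{P_2} = P_2(p_1)$. So I would instead work with the quadric form on $\p3$ (before taking strict transforms) and observe: the bilinear form $F_Q$ satisfies $F_Q(-, e^1_{Q_2}) = 0$ and $F_Q(e^2_{Q_1}, -) = 0$, i.e.\ it vanishes whenever one of its arguments is an epipole \emph{of $Q$}. That is not immediately what we want. The cleaner route: recall that $S_P$ is, by construction, the pullback $\widetilde{\phi_P}^{-1}(\im(\widetilde{\phi_Q}))$, and by \cref{lem:joint-camera_map_is_isomorphism} the only locus $\widetilde{\phi_P}$ contracts is the line $\overline{p_1 p_2}$; everywhere else it is an embedding. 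The point $p_1$ (more precisely its exceptional divisor) maps under $\widetilde{\phi_P}$ to $\{e^1_{P_1}\} \times \p2$... hmm — actually the exceptional divisor over $p_1$ maps to a line in the first $\p2$ factor. I would instead argue as follows: the fundamental form $F_P$ vanishes at the epipole pair, and more to the point, the bilinear form $F_Q$ has rank $2$, so its vanishing locus in $\pxp$ contains the two "epipolar" loci $\{e^2_{Q_1}\}\times\p2$ and $\p2\times\{e^1_{Q_2}\}$. Pulling these back under $\widetilde{\phi_P}$ gives lines on $\widetilde{S_P}$. The honest computation for (1) is: plug $x$ near $p_1$ into $\det$-formula for $F_Q(P_1(x),P_2(x))$; the first-image column $P_1(x)$ scales to the exceptional direction, and one checks the $6\times 6$ determinant degenerates, forcing $p_1 \in S_P$. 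I expect this determinantal degeneration at $p_i$ to be the least routine part and the main obstacle.

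For part (2), ruledness, I would use the standard classification of quadric surfaces: a quadric in $\p3$ is ruled (contains real lines) if and only if it is not of the form "sum of (up to) four squares with all plus signs after diagonalization", i.e.\ if and only if its real signature is not $(4,0)$ or $(0,4)$ — equivalently it has a nontrivial real point and is not a "virtual" or pointless quadric. Since by part (1) $S_P$ contains the real points $p_1, p_2$, it is nonempty over $\mathbb{R}$, so the only way it could fail to be ruled is if it is an imaginary-type quadric — ruled out — or a real ellipsoid-type quadric of signature $(3,1)$ (a sphere), which is the one nonempty non-ruled smooth case, or a point/line/plane-pair in the degenerate cases (all of which are ruled except the single real point, which is excluded since we have two distinct real points $p_1 \neq p_2$). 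So the crux is to exclude the ellipsoid/sphere case: I would show $S_P$ must contain a real line. The natural candidate lines are the epipolar lines. Concretely, fix the epipolar line $\ell$ in $\p3$ through $p_1$ that corresponds, under $P_1$, to the epipole direction matching $e^1_{Q_?}$ — i.e.\ the line whose $P_1$-image is the point $P_1(p_2)=e^1_{P_1}$... Let me restate: for any point $y$ in the second image, $P_2^{-1}(y)$ is a line through $p_2$; pick $y = e^2_{Q_1}$, and then $F_Q(P_1(x), P_2(x)) = F_Q(P_1(x), e^2_{Q_1}\text{-ish})$... this requires matching up the $P$- and $Q$-epipoles, which need not agree. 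The robust argument: $S_P$ is a quadric through two distinct real points $p_1,p_2$; if $S_P$ were a sphere it would still contain the real line $\overline{p_1p_2}$ only if that line meets it in a full line, which a sphere never does — but a sphere meets a secant line in two points, so no contradiction yet. So I genuinely need to produce a line.

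The cleanest path, and the one I would commit to: show that the line $L = \overline{p_1 p_2}$ spanned by the two $P$-camera-centers lies on $S_P$. Indeed $\widetilde{\phi_P}$ contracts $\widetilde{L}$ to a single point $(e,e') \in \pxp$ (by \cref{lem:joint-camera_map_is_isomorphism}), and that point lies on $\im(\widetilde{\phi_Q})$ because it lies on \emph{every} bilinear form's vanishing locus that is compatible — no, that's false in general. Hmm. So instead: $(e,e')$ lies on $\im(\widetilde{\phi_Q})$ iff $F_Q(e,e') = 0$; this need not hold. Therefore $L \not\subset S_P$ in general, and the ruling lines must come from elsewhere — namely the epipolar pencils. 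I would therefore argue: $S_P$ contains the two epipoles' epipolar lines: the line $P_1^{-1}(e^1_{P_1})$ where $e^1_{P_1}=P_1(p_2)$ — wait, that's $L$ again. Let me fix it properly in the final writeup by using that $\im(\widetilde\phi_Q)$, being the zero locus of a rank-$2$ bilinear form, contains $\{a\}\times\p2$ for $a = e^2_{Q_1}$, whose $\widetilde{\phi_P}$-preimage is exactly the plane $P_1^{-1}(a)$ intersected with... no, $P_1^{-1}(\text{line})$. The upshot is that $S_P$ contains a pencil of lines (the $P_1$-preimages of points on the epipolar line of $e^2_{Q_1}$), giving ruledness. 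I expect reconciling the $P$- and $Q$-epipoles, and making this pencil-pullback argument rigorous, to be the main obstacle; once a single real line on $S_P$ is exhibited, ruledness follows from the quadric classification since a real quadric containing a real line is ruled.
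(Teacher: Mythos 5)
Your overall strategy --- verify both claims directly from the defining formula $S_P(x)=F_Q(P_1(x),P_2(x))$ of \eqref{eq:SP_and_SQ} --- is the one the paper (following \cite{HK}) intends, but both halves of your writeup stop short of an actual argument, and in each case the step you flag as ``the main obstacle'' is the easy one. For (1) there is no determinantal degeneration to check and no need to pass to the blow-up: $S_P$ is a polynomial on $\p3$, and $F_Q$ here is just a bilinear form given by a $3\times3$ matrix --- the $6\times6$ determinant you invoke computes $F_P$ from $P_1,P_2$ and is irrelevant. The whole point is that the camera center is the kernel of the camera matrix, so as a vector $P_1p_1=0$, and a bilinear form vanishes when one argument is the zero vector: $S_P(p_1)=F_Q(P_1p_1,P_2p_1)=F_Q(0,\,\cdot\,)=0$, and symmetrically for $p_2$. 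That is what ``the nature of the pullback'' means; your detour through the exceptional divisor conflates the rational map $P_1\colon\p3\dashrightarrow\p2$ (undefined at $p_1$) with the linear map $x\mapsto P_1x$ (perfectly defined there, with value $0$).

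For (2) you do eventually reach the right mechanism --- exploit that $F_Q$ has rank $2$ --- but your final formulation is wrong. The left kernel $e=e_{Q_1}^{2}$ of the real rank-$2$ matrix $F_Q$ is a single real point of $\p2$ with $F_Q(e,-)\equiv 0$, so the locus $\{x\mid P_1(x)=e\}=P_1^{-1}(e)$ is \emph{one} real line through $p_1$, and it lies on $S_P$; likewise $P_2^{-1}(e_{Q_2}^{1})$ is a real line through $p_2$ on $S_P$. (These are exactly the epipolar lines of \eqref{eq:g1}--\eqref{eq:g2}.) Your closing claim that ``$S_P$ contains a pencil of lines, the $P_1$-preimages of points on the epipolar line of $e_{Q_1}^{2}$'' is false: the $P_1$-preimage of a line in the image is a plane, which is not contained in $S_P$ in general; only the preimage of the kernel point is. You also leave the reality of the kernel unaddressed (it holds because $Q_1,Q_2$, hence $F_Q$, have real entries), and the excursion through signatures and the sphere case is unnecessary: by the paper's definition ``ruled'' simply means ``contains real lines'', so exhibiting the single line above finishes the proof.
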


\begin{proof}
The first item follows from the nature of the pullback, whereas the second is because we are working with forms of rank 2. Detailed proof is given in \cite{HK}.
\end{proof}

There are only four quadrics containing lines (up to choice of coordinates), these are illustrated in \cref{fig:ruled_quadrics}. 

\begin{figure}[]
\begin{center}
\includegraphics[width = 0.90\textwidth]{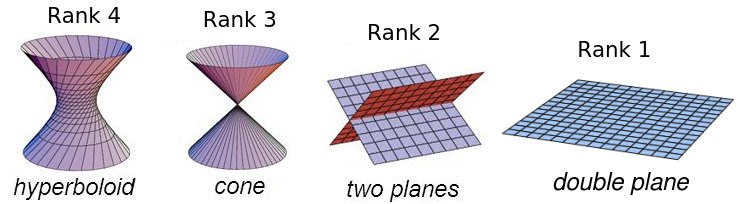}
\end{center}
\caption{All types of real ruled quadrics.}
\label{fig:ruled_quadrics}
\end{figure}

The discussion so far can be summarized as follows:
\begin{theorem}[{Lemma 5.10 in \cite{HK}}]
\label{thr:critical_configurations_lie_on_quadrics}
Let $(P_1,P_2,X)$ be a non-trivial critical configuration. Then there exists a ruled quadric $S_P$ passing through the camera centers $p_1$, $p_2$, whose strict transform contains the points $X$.
\end{theorem}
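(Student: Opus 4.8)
The plan is to repackage the discussion preceding the statement into a proof; the one point that deserves a genuine (if short) argument is that the quadratic form $S_P$ of \cref{eq:SP_and_SQ} does not vanish identically, so that it really does define a quadric surface.

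I would start from the conjugate. Since $(P_1,P_2,X)$ is critical it has a conjugate configuration $(Q_1,Q_2,Y)$, and by our standing non-triviality convention (\cref{def_non_trivial_configuration}) we may assume $F_P\neq F_Q$. From $\widetilde{\phi_P}(X)=\widetilde{\phi_Q}(Y)$ together with \cref{lem:fundamental_form} it follows that $\widetilde{\phi_P}$ sends $X$ into $\im(\widetilde{\phi_P})\cap\im(\widetilde{\phi_Q})=V(F_P)\cap V(F_Q)$. In particular every point of $X$ lies in the zero locus of the pullback of $F_Q$ along $\widetilde{\phi_P}$, i.e.\ of $S_P(x)=F_Q\bigl(P_1(x),P_2(x)\bigr)$.

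Next I would check that $S_P$ is a nonzero form of degree $2$. It is quadratic because $F_Q$ is bilinear and $P_1,P_2$ are linear in $x$. If it were identically zero, then $\im(\widetilde{\phi_P})=V(F_P)$ would be contained in $V(F_Q)$; as $V(F_P)$ is irreducible (\cref{lem:joint-camera_map_is_isomorphism}) of the same dimension as the hypersurface $V(F_Q)$, this forces $V(F_P)=V(F_Q)$, and since $F_P$ is irreducible and has the same bidegree $(1,1)$ as $F_Q$, we would get $F_Q=cF_P$ for a scalar $c$, contradicting $F_P\neq F_Q$. Hence $S_P$ cuts out a genuine quadric surface $S_P\subset\p3$; since pulling back a bilinear form along $\widetilde{\phi_P}$ yields the strict transform of the associated quadric (and the points of $X$ map to honest points of the multi-view variety, not into an exceptional divisor), the strict transform $\widetilde{S_P}$ contains $X$.

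Finally, \cref{lem:properties_of_the_quadrics} tells us that $S_P$ passes through the camera centers $p_1,p_2$ and is ruled, which is exactly the assertion. The only real obstacle is the non-vanishing check in the middle step; the rest is just unwinding the definitions and quoting the cited lemmas.
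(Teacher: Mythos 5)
Your proposal is correct and follows essentially the same route as the paper, which derives the theorem by pulling back $F_Q$ along $\widetilde{\phi_P}$ to obtain the quadric $S_P$ and then invoking \cref{lem:properties_of_the_quadrics} for the ruledness and passage through the camera centers. Your explicit check that $S_P$ is not identically zero (via irreducibility and equal dimension of $V(F_P)$ and $V(F_Q)$) is a welcome addition that the paper only handles implicitly by asserting the intersection of the two multi-view varieties is a surface.
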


\cref{thr:critical_configurations_lie_on_quadrics} tells us that all non-trivial critical configurations have their points and camera centers lying on a ruled quadric. The converse, however, is not always true. For instance, we will soon see that cameras and camera centers all lying on a cone is not critical if the cone contains the line spanned by the two camera centers (see \cref{fig:critical_quadrics}). Let us instead give a partial converse:

\begin{lemma}
\label{lem:critical_if_pullback_of_rank2_form}
Let $(P_1,P_2,X)$ be a configuration of cameras and points such that $X$ is contained in the strict transform $\widetilde{S_P}$ of a ruled quadric $S_P$ that passes through both the camera centers. Then for each real bilinear form $F_Q$ of rank 2 such that:
\begin{align*}
S_P(x)=F_Q(P_1(x),P_2(x)),
\end{align*}
there exists a conjugate configuration to $(P_1,P_2,X)$.
\end{lemma}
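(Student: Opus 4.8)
The plan is to reverse the construction that produced $S_P$ in the first place. Given the real rank-2 bilinear form $F_Q$ with $S_P(x) = F_Q(P_1(x), P_2(x))$, I would first invoke \cref{thr:fundforms_and_camera_pairs_are_1:1}: since $F_Q$ is a real bilinear form of rank $2$, it is the fundamental form of some pair of cameras $(Q_1, Q_2)$, uniquely determined up to the action of $\PGL(4)$. This gives me a candidate conjugate configuration; what remains is to produce the point set $Y \subset \widetilde{\p3}_Q$ and to check that $(\mathbf{Q}, Y)$ genuinely has the same images as $(\mathbf{P}, X)$ and is not equivalent to it.

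Next I would define $Y$ as the preimage under $\widetilde{\phi_Q}$ of the image set $\widetilde{\phi_P}(X)$; equivalently, $Y$ is the subset of the strict transform $\widetilde{S_Q}$ of the quadric $S_Q(x) = F_P(Q_1(x), Q_2(x))$ lying over $\widetilde{\phi_P}(X)$. The key point to verify is that $\widetilde{\phi_P}(X)$ actually lies in the image of $\widetilde{\phi_Q}$, i.e. in the multi-view variety $V(F_Q)$. But this is exactly the hypothesis: for $x \in X \subset \widetilde{S_P}$ we have $F_Q(P_1(x), P_2(x)) = S_P(x) = 0$, so every image point of $X$ satisfies the equation cut out by $F_Q$, hence lies in $\im(\widetilde{\phi_Q})$ by \cref{lem:fundamental_form}. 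Since $\widetilde{\phi_Q}$ is surjective onto its image, each such image point has at least one preimage in $\widetilde{\p3}_Q$, and collecting these preimages defines $Y$ with $\widetilde{\phi_Q}(Y) = \widetilde{\phi_P}(X)$. This shows $(\mathbf{Q}, Y)$ is a reconstruction of the same images.

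It then remains to argue non-equivalence, so that $(\mathbf{Q}, Y)$ is a genuine conjugate configuration in the sense of \cref{def:conjugate_configuration/point} and hence $(P_1, P_2, X)$ is critical. Here I would use that $F_P \neq F_Q$: if $(\mathbf{P}, X)$ and $(\mathbf{Q}, Y)$ were equivalent via some $A \in \PGL(4)$, then $A$ would carry the camera centers $p_i$ to $q_i$ and intertwine $P_i$ with $Q_i$, which forces the fundamental forms to agree (the fundamental form is built functorially from the camera matrices, and a projective change of coordinates on $\widetilde{\p3}$ together with the coordinate changes $H_i$ on the images relating $Q_i \circ A$ to $P_i$ only rescales the form). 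This contradicts $F_P \neq F_Q$. One should be slightly careful that the lemma's hypothesis only guarantees the existence of \emph{some} $F_Q$ with the stated property, and a priori this $F_Q$ could equal $F_P$ — but then $S_P(x) = F_P(P_1(x), P_2(x)) \equiv 0$, meaning $S_P$ is not an honest quadric surface but the whole space, contradicting the assumption that $S_P$ is a (ruled) quadric; so any legitimate $F_Q$ arising this way is automatically distinct from $F_P$.

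The main obstacle I anticipate is the non-equivalence step: showing rigorously that equivalence of configurations in the sense of \cref{def:equvalent_configurations} implies equality of fundamental forms up to the $\PGL(3) \times \PGL(3)$ action on the image pair, and hence — combined with \cref{thr:fundforms_and_camera_pairs_are_1:1}, which is stated modulo exactly that action — that $F_P = F_Q$ as rank-2 forms. The surjectivity-of-$\widetilde{\phi_Q}$-onto-its-image part is immediate since we work with the blow-up (that was the whole point of passing to $\widetilde{\p3}$), and the containment $\widetilde{\phi_P}(X) \subseteq \im(\widetilde{\phi_Q})$ is a one-line computation from the hypothesis, so the substance of the argument is entirely in pinning down that two honestly different reconstructions cannot share a fundamental form.
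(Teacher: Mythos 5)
Your proposal is correct and follows essentially the same route as the paper's proof: invoke \cref{thr:fundforms_and_camera_pairs_are_1:1} to obtain cameras $(Q_1,Q_2)$ with fundamental form $F_Q$, observe that $F_Q(P_1(x),P_2(x))=S_P(x)=0$ for $x\in X$ places $\widetilde{\phi_P}(X)$ inside $\im(\widetilde{\phi_Q})$, and collect preimages to form $Y$. The only difference is that you spell out the non-equivalence step (equivalence would force $F_P=F_Q$, which is excluded since $F_P$ pulls back to the zero polynomial while $S_P$ is an honest quadric) — a point the paper's proof leaves implicit but which is worth making explicit.
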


\begin{proof}
Assume such a bilinear form $F_Q$ exists. By \cref{thr:fundforms_and_camera_pairs_are_1:1} there exists a pair of cameras $(Q_1,Q_2)$ such that $F_Q$ is their fundamental form. Since $X$ lies on $\widetilde{S_P}$, we have 
\begin{align*}
\widetilde{\phi_{P}}(X)\subset\im(\widetilde{\phi_Q}).
\end{align*}
Then for every point $x\in X$ we can find a point $y$ such that
\begin{align*}
\widetilde{\phi_P}(x)=\widetilde{\phi_Q}(y).
\end{align*}
Let $Y$ be the set of these points $y$. Then $(Q_1,Q_2,Y)$ is a conjugate to $(P_1,P_2,X)$. This can be repeated for each bilinear form of rank 2, giving unique conjugate configurations.
\end{proof}

The problem of determining which configurations are critical is now reduced to finding out which quadrics are the pullbacks of real bilinear forms of rank 2.

Let $\textbf{F}$ denote the space of bilinear forms on $\pxp$. Since all such forms can be represented by a $3\times3$ matrix (up to scaling) we have that $\textbf{F}$ is isomorphic to $\p8$. The fundamental form $F_P$ of the pair $(P_1,P_2)$ is an element in $\textbf{F}$. 

\begin{lemma}
\label{lem:correspndence_between_lines_and_quadrics}
There is a $1:1$ correspondence between the set of real quadrics in $\p3$ passing through $p_1,p_2$, and the set of real lines in $\textbf{F}$ passing through $F_P$.
\end{lemma}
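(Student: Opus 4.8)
The plan is to exhibit the claimed correspondence as the projectivization of a single linear map. Let $V$ be the $10$-dimensional space of quadratic forms on $\mathbb{C}^{4}$, so that quadrics in $\p3$ are the points of $\mathbb{P}(V)=\p9$, and recall that $\textbf{F}$, the space of bilinear forms on $\pxp$, is $9$-dimensional with $\mathbb{P}(\textbf{F})=\p8$. First I would introduce the \emph{pullback map}
\begin{align*}
\psi\from\textbf{F}\to V,\qquad \psi(G)(x)=G\bigl(P_1(x),P_2(x)\bigr),
\end{align*}
which is linear in $G$ since $G$ is bilinear and $P_1(x)$, $P_2(x)$ are tuples of linear forms in $x$. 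Because $\im(\widetilde{\phi_P})=V(F_P)$ by \cref{lem:fundamental_form}, we have $\psi(F_P)=0$, so $\psi$ is not injective.

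The crux is to show that it is ``only just'' non-injective, that is $\ker\psi=\langle F_P\rangle$. If $\psi(G)=0$, then $G$ vanishes on the image of $\widetilde{\phi_P}$ and hence lies in the multi-view ideal; that ideal is generated by $F_P$ by \cref{lem:fundamental_form}, and since $G$ and $F_P$ both have bidegree $(1,1)$ this forces $G=cF_P$ for some scalar $c$. Thus $\ker\psi=\langle F_P\rangle$ exactly. This is the one place where something genuine about the cameras is needed --- namely that $F_P$ cuts out the multi-view variety ideal-theoretically --- and it is exactly \cref{lem:fundamental_form}; everything else is linear algebra and a dimension count.

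Next I would identify the image. For every $G\in\textbf{F}$ and each $i$ one has $\psi(G)(p_i)=G\bigl(P_1(p_i),P_2(p_i)\bigr)=0$, because $p_i$ is the kernel of the matrix $P_i$, so one of the two arguments of $G$ vanishes; this recovers \cref{lem:properties_of_the_quadrics}(1). Hence $\psi(\textbf{F})\subseteq W$, where $W\subseteq V$ is the space of quadratic forms vanishing at both $p_1$ and $p_2$. As $p_1\neq p_2$, these two conditions are linearly independent, so $\dim W=8$. Therefore $\psi$ factors as $\textbf{F}\twoheadrightarrow\textbf{F}/\langle F_P\rangle\xrightarrow{\overline{\psi}}W$ with $\overline{\psi}$ injective, and since $\dim(\textbf{F}/\langle F_P\rangle)=8=\dim W$, the map $\overline{\psi}$ is an isomorphism; it is defined over $\mathbb{R}$ because the camera matrices, and hence $F_P$ and the points $p_1,p_2$, are real.

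Finally I would read off the statement. The lines of $\mathbb{P}(\textbf{F})$ through the point $[F_P]$ are, via projection away from that point, in canonical bijection with the points of $\mathbb{P}(\textbf{F}/\langle F_P\rangle)$; composing with the isomorphism induced by $\overline{\psi}$ identifies these with the points of $\mathbb{P}(W)$, which is precisely the set of quadrics in $\p3$ through $p_1$ and $p_2$. Concretely, the line $\overline{F_P F_Q}$ corresponds to the quadric $S_P$ given by $S_P(x)=F_Q(P_1(x),P_2(x))$, matching \cref{eq:SP_and_SQ}; and since the entire construction is defined over $\mathbb{R}$, real lines through $F_P$ correspond to real quadrics through $p_1,p_2$, as claimed.
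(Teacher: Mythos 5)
Your proof is correct, and it takes a genuinely different (and tighter) route than the paper for the harder direction. The paper's forward direction --- that all forms $\alpha F_0+\beta F_P$ on a line through $F_P$ pull back to the same quadric --- is exactly your observation that the pullback $\psi$ is linear with $F_P\in\ker\psi$, so there the two arguments coincide. The divergence is in the converse: the paper argues synthetically that a quadric through $p_1,p_2$ is pinned down by $7$ generic points, that each point imposes one linear condition on $\textbf{F}$, and that these conditions cut out a single line through $F_P$; this leaves the independence of the conditions (and hence the exactness of the count) implicit. You instead compute $\ker\psi=\langle F_P\rangle$ directly from the fact that the multi-view ideal is principal and generated by the bidegree-$(1,1)$ form $F_P$ (\cref{lem:fundamental_form}), observe that $\im\psi$ lands in the $8$-dimensional space $W$ of quadratic forms vanishing at $p_1,p_2$, and conclude by a dimension count that $\overline{\psi}\from\textbf{F}/\langle F_P\rangle\to W$ is an isomorphism, so that lines through $[F_P]$ biject with $\mathbb{P}(W)$. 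This buys you injectivity and surjectivity simultaneously, makes the role of \cref{lem:fundamental_form} explicit (the paper never invokes it here, though it is the real content), recovers \cref{lem:properties_of_the_quadrics}(1) for free, and handles the reality statement cleanly since everything is defined over $\mathbb{R}$. The paper's argument is more elementary and avoids the ideal-theoretic input, at the cost of a genericity assumption it does not justify. The only point worth making explicit in your write-up is that a form $F_0$ not proportional to $F_P$ has $\psi(F_0)\neq 0$, so every line through $F_P$ really does produce a nonzero quadratic form --- but this is immediate from your computation of the kernel, so the argument stands as written.
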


\begin{proof}
Let $L\subset\textbf{F}$ be a line through $F_P$, and let $F_0\neq F_P$ be some point on $L$. Every point $F\in L$ can be written as $\alpha F_0+\beta F_P$ for some $[\alpha:\beta]\in\p1$. But then we have for $F\neq F_P$:
\begin{align*}
F(P_1(x),P_2(x))&=\alpha F_0(P_1(x),P_2(x))+\beta F_P(P_1(x),P_2(x)),\\
&=\alpha F_0(P_1(x),P_2(x))+\beta \cdot 0,\\
&=F_0(P_1(x),P_2(x)).
\end{align*} 
Hence, the equation $F(P_1(x),P_2(x))$ describes the same quadric for all points $F\neq F_P$ on $L$.

Next, a quadric $S$ passing through $p_1$ and $p_2$ is fixed by a set of 7 points $\Set{x_1,\ldots,x_7}$ on $S$ in generic position. Demanding that a bilinear form pulls back to a quadric passing through a specific point is one linear constraint in $\textbf{F}$. So with seven generic points, there is exactly one line $L$ through $F_P$ such that the forms on this line pull back to $S$.
\end{proof}

Using this $1:1$ correspondence and \cref{lem:critical_if_pullback_of_rank2_form}, the problem has been reduced to determining which quadrics correspond to lines in $\textbf{F}$ containing at least one viable form of rank 2. Let $\textbf{F}_2$ denote the Zariski closure of the rank 2 locus. Since $\textbf{F}_2$ is a hypersurface of degree 3, a generic line $L$ will contain two forms of rank 2 in addition to $F_P$. There are also other possibilities, listed in the tables below (the underlying computations for these tables can be found in the appendix.

We start with the cases where the line $L$ corresponding to $S_P$ has a finite number of intersections with the rank 2 locus $\textbf{F}_2$.

\begin{table}[h]
\begin{tabular}{@{}p{0.5\textwidth}p{0.45\textwidth}@{}}
\hline
\textbf{Intersection points}                                                                         & \textbf{$S_P$}                                                         \\ \hline
All three intersection points are distinct real points       & A smooth quadric, cameras not on a line                       \\ \hline
Two intersection points $F_P$ and $F_Q$, $L\cap\textbf{F}_2$ has multiplicity 2 at $F_Q$  & A cone, two cameras not on a line, neither camera on a vertex \\ \hline
Two intersection points $F_P$ and $F_Q$, $L\cap\textbf{F}_2$ has multiplicity 2 at $F_P$ & A smooth quadric, cameras lie on a line                       \\ \hline
\end{tabular}
\end{table}
\FloatBarrier
These are the cases where we have at least one real rank 2 form $F\in L$ different from $F_P$. There are, however, some cases where there are no viable forms:

\FloatBarrier
\begin{table}[h]
\begin{tabular}{@{}p{0.5\textwidth}p{0.45\textwidth}@{}}
\hline
\textbf{Intersection points}                                                                         & \textbf{$S_P$}                                                         \\ \hline
The two other intersection points are complex conjugates                               & A smooth non-ruled quadric                              \\ \hline
The two other intersection points are of rank 1 & Union of two planes, cameras in different planes             \\ \hline
The two intersection points are equal to $F_P$                        & A cone, both cameras on a line, neither camera at the vertex \\ \hline
\end{tabular}
\end{table}
\FloatBarrier
In the case where $L$ is contained in $\textbf{F}_2$, all the forms on $L$ are of rank 2 (with the possible exception of at most 2 that can be of rank 1). As such, rather than looking at where the intersections are, we look at the epipoles of the forms in $L$:
\FloatBarrier

\begin{table}[ht]
\begin{tabular}{@{}p{0.5\textwidth}p{0.45\textwidth}@{}}
\hline
\textbf{Epipoles}                                                                         & \textbf{$S_P$}                                                          \\ \hline
All forms have different epipoles                                                & Two planes, cameras lying in same plane                          \\ \hline
All forms share the same right epipole, the left epipoles trace a line\footnotemark  & Two planes, one camera on the intersection of the planes         \\ \hline
All forms share the same right epipole, the left epipoles trace a conic\footnotemark[\value{footnote}] & Cone, one camera at the vertex                                   \\ \hline
All forms share the same right and left epipole                                  & Two (possibly complex) planes, both cameras lying on the intersection of the planes \\ \hline
All forms share the same right and left epipole AND the two rank one forms on $L$ coincide & Double plane (as a set it is equal to a plane, but every point has multiplicity 2)                    \\ \hline
\end{tabular}
\end{table}
\FloatBarrier
\footnotetext{The statement also holds if we swap "left" and "right".}
With this, we have a classification of all maximal critical configurations for two views:

\begin{theorem}
\label{thr:critical_conf_for_two_views}
A configuration $(P_1,P_2,X)$ is non-trivially critical if and only if there exists a real quadric $S_P$ containing the camera centers $p_1,p_2$ and whose strict transform contains $X$, and $S_P$ is one of the quadrics in \cref{tab:configurations_and_their_conjugates}. (illustrated in \cref{fig:critical_quadrics})
\end{theorem}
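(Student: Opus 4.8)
The plan is to assemble the pieces already developed in Section 4 into a clean ``if and only if''. The backward direction is essentially immediate: if $X$ lies on the strict transform of a real ruled quadric $S_P$ through $p_1,p_2$ that appears in \cref{tab:configurations_and_their_conjugates}, then by construction of the table that quadric corresponds (via \cref{lem:correspndence_between_lines_and_quadrics}) to a line $L\subset\textbf{F}$ through $F_P$ containing at least one real rank-2 form $F_Q\neq F_P$; \cref{lem:critical_if_pullback_of_rank2_form} then produces an honest conjugate configuration, so $(P_1,P_2,X)$ is critical. Here I would also need to check that the conjugate one obtains is genuinely nonequivalent (so that we land in a \emph{non-trivial} critical configuration in the sense of \cref{def_non_trivial_configuration}); this follows because $F_Q\neq F_P$ and \cref{thr:fundforms_and_camera_pairs_are_1:1} gives a $1:1$ correspondence between rank-2 forms and camera pairs up to $\PGL(4)$.

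For the forward direction, suppose $(P_1,P_2,X)$ is critical. By \cref{thr:critical_configurations_lie_on_quadrics} there is a ruled quadric $S_P$ through $p_1,p_2$ whose strict transform contains $X$, and by \eqref{eq:SP_and_SQ} this quadric is the pullback $F_Q(P_1(x),P_2(x))$ for the fundamental form $F_Q$ of some conjugate. Via \cref{lem:correspndence_between_lines_and_quadrics}, $S_P$ corresponds to a line $L\subset\textbf{F}$ through $F_P$, and $F_Q$ is a real rank-2 point of $L$ distinct from $F_P$ (distinctness being exactly the non-triviality assumption in force since \cref{def_non_trivial_configuration}). So $L$ meets the rank-2 locus $\textbf{F}_2$ in at least one real point besides $F_P$. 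The remaining work is the case analysis: $L$ is either not contained in $\textbf{F}_2$, in which case $L\cap\textbf{F}_2$ is a finite scheme of degree $3$ containing $F_P$ and the possibilities for the residual two points (two distinct real points; a double point at $F_P$; a double point at $F_Q$; complex conjugate pair; rank-1 point; both residual points at $F_P$) are exactly the rows of the two finite-intersection tables — and only the first three admit a usable real rank-2 $F_Q\neq F_P$; or $L\subset\textbf{F}_2$, in which case the classification is by the behavior of the epipoles of the forms along $L$, giving the rows of the third table. In every case the table records which quadric $S_P$ results, and one verifies it is among those in \cref{fig:critical_quadrics}.

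The main obstacle, and the part I expect to require the most care, is verifying that the case lists in the three tables are genuinely \emph{exhaustive} and that the dictionary between ``combinatorial type of $L\cap\textbf{F}_2$ (or epipole behavior)'' and ``projective type of the ruled quadric $S_P$'' is correct in every row. The degree-3 intersection count and the possible multiplicity/reality patterns are a finite enumeration, but translating each pattern into the precise geometry of $S_P$ (smooth quadric with/without the cameras on a ruling line, cone with the cameras off the vertex or one at the vertex, pair of planes with the cameras distributed one way or another, double plane) requires the explicit computations deferred to the appendix; here I would simply cite those computations and the two preceding lemmas, and organize the verification row by row. A secondary point to handle cleanly is the equivalence ``$(P_1,P_2,X)$ critical $\iff$ images have no unique reconstruction $\iff$ maximal $X$ equals the full strict transform $S_P$'', so that the theorem — stated for arbitrary $X$ on such a quadric — correctly captures non-maximal subconfigurations as well; this is where one invokes that any subconfiguration of a (non-trivial) critical configuration remains critical, together with \cref{prop:critical_points_give_critical_configuration} for the maximal case.
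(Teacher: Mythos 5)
Your proposal is correct and follows essentially the same route as the paper, which presents this theorem as a summary whose proof is the preceding chain of results — \cref{thr:critical_configurations_lie_on_quadrics} for the forward direction, \cref{lem:correspndence_between_lines_and_quadrics} and \cref{lem:critical_if_pullback_of_rank2_form} for the converse, and the exhaustive case analysis of $L\cap\textbf{F}_2$ carried out in the tables and the appendix. You have correctly identified and assembled all of these ingredients, including the non-triviality and maximality caveats.
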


\begin{table}[]
\begin{tabular}{@{}p{0.54\textwidth}p{0.22\textwidth}p{0.16\textwidth}@{}}
\toprule
\textbf{Quadric $S_P$}      & \textbf{Conjugate quadric} & \textbf{Conjugates}\\
\midrule
Smooth quadric, cameras not on a line       & Same    & 2 \\ \midrule
Smooth quadric, cameras on a line           & Cone, cameras not on a line & 1\\ \midrule
Cone, cameras not on a line & Smooth quadric, cameras on a line           & 1\\ \midrule
Cone, one camera at vertex, other one not & Same  & $\infty$ \\ \midrule
Two planes, cameras in the same plane               & Same   &$\infty$ \\ \midrule
Two planes, one camera on the singular line         & Same   &$\infty$ \\ \midrule
Two planes, cameras on the singular line       & Same   &$\infty$ \\ 
\midrule
A double plane, cameras in the plane     & Same   &$\infty$ \\
\bottomrule
\end{tabular}
\caption{A list of all possible non-trivial critical configurations and their conjugates, as well as the number of conjugates for each configuration.}
\label{tab:configurations_and_their_conjugates}
\end{table}

\begin{figure}[]
\begin{center}
\includegraphics[width = \textwidth]{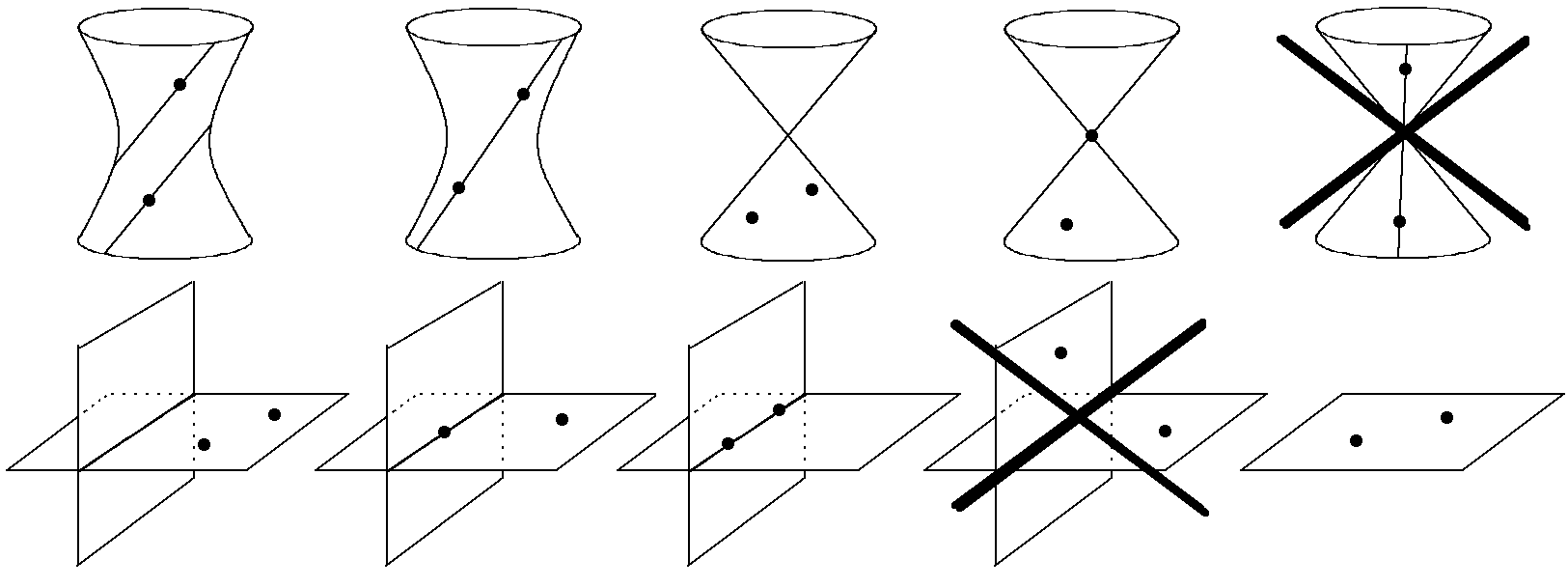}
\end{center}
\caption{Illustration of the blow-downs of the non-trivial critical configurations for two views. The two marked with crosses are not critical.}\label{fig:critical_quadrics}
\end{figure}

Recall that by \cref{def:equvalent_configurations}, we required two conjugate configurations to not be projectively equivalent. Yet by \cref{thr:critical_conf_for_two_views}, most critical configurations have conjugates that are of the same type. Now, while there is indeed some $A\in\PGL(4)$ taking any smooth quadric $S_P$ to any other smooth quadric $S_Q$, we will soon see that the map taking a point in $S_P$ to its conjugate on $S_Q$ certainly does not lie in $\PGL(4)$. In the final section, we give a description of the map taking a point to its conjugate, to make it clear that it is not a projective transformation.

\section{Maps between quadrics}
\label{sec:maps_between_quadrics}
\subsection{Epipolar lines}
Before we can describe the map taking a point to its conjugate, we need to point out a certain pair of lines on $S_P$. Given two pairs of cameras $(P_1,P_2)$ and $(Q_1,Q_2)$, let $\widetilde{S_P}$ be the pullback of $F_Q$ using $\widetilde{\phi_{P}}$, and define
\begin{align}
\label{eq:g1}
\widetilde{g_{P_1}^{2}}=P_1^{-1}(e_{Q_1}^{2})=P_1^{-1}(Q_1(q_2)),\\
\label{eq:g2}
\widetilde{g_{P_2}^{1}}=P_2^{-1}(e_{Q_2}^{1})=P_2^{-1}(Q_2(q_1)).
\end{align}
The blowdown of $\widetilde{g_{P_1}^{2}}$ and $\widetilde{g_{P_2}^{1}}$ are two lines on $S_P$, we denote them by $g_{P_i}^{j}$. Since they are the pullback of the epipoles from the other set of cameras, so we call them \emph{epipolar lines}.

Epipolar lines are key in understanding the relation between points on $\widetilde{S_P}$ and points on its conjugate $\widetilde{S_Q}$. They also play an important role in the study of critical configurations for more than 2 cameras, so let us give a brief analysis of these lines. 

\begin{restatable}[Lemma 5.10, Definition 5.11 in \cite{HK}]{lemma}{lemmaPermissible}
\label{lem:permissible_lines}
\begin{enumerate}
\item The line $\gp{i}{j}$ lies on $S_P$ and passes through $p_i$.
\item Any point lying on both $\gp{i}{j}$ and $\gp{j}{i}$ is a singular point on $S_P$.
\item Any point in the singular locus of $S_P$ that lies on one of the lines also lies on the other.
\item If $S_P$ is the union of two planes, $\gp{i}{j}$ and $\gp{j}{i}$ lie in the same plane.
\end{enumerate}
\end{restatable}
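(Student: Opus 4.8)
\emph{Overall plan.} The plan is to work in the blow-down $\p3$, where $S_P$ is an ordinary quadric surface and each $g$-line is an ordinary line, and to reduce everything to linear algebra with the fundamental matrix of $(Q_1,Q_2)$. Concretely, let $M$ be the $3\times 3$ matrix representing $F_Q$, so $F_Q(u,v)=u^\top M v$; by \cref{lem:fundamental_form}, $M$ has rank $2$. By \cref{eq:SP_and_SQ} we have $S_P(x)=(P_1x)^\top M(P_2x)$ as a quadratic form in $x\in\mathbb C^4$, with gradient $\nabla S_P(x)=P_1^\top M(P_2x)+P_2^\top M^\top(P_1x)$. The fact that $F_Q$ vanishes in both epipoles says precisely that $e_{Q_2}^1$ spans $\ker M$ and $e_{Q_1}^2$ spans $\ker M^\top$. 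From the definitions, $g_{P_1}^2=\{x:P_1x\in\langle e_{Q_1}^2\rangle\}$ and $g_{P_2}^1=\{x:P_2x\in\langle e_{Q_2}^1\rangle\}$; since the camera centers are distinct, both epipoles are well-defined points, and since each $P_i$ has rank $3$ the preimage of a point under $P_i$ is one-dimensional, so each $g$-line is genuinely a line through $p_1$, resp.\ $p_2$. This already gives the ``passes through $p_i$'' half of item~1.

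\emph{Items 1--3 (direct computation).} For item~1, if $x\in g_{P_1}^2$ then $P_1x$ is a scalar multiple of $e_{Q_1}^2$, so $S_P(x)$ is the same multiple of $(e_{Q_1}^2)^\top M(P_2x)=0$; the case of $g_{P_2}^1$ is symmetric, using $Me_{Q_2}^1=0$. For item~2, if $x$ lies on both lines, then $P_2x\in\langle e_{Q_2}^1\rangle$ forces $M(P_2x)=0$ and $P_1x\in\langle e_{Q_1}^2\rangle$ forces $M^\top(P_1x)=0$, so $\nabla S_P(x)=0$ and $x$ is singular. For item~3, suppose $x$ is singular and $x\in g_{P_1}^2$; then $\nabla S_P(x)=0$ while $P_2^\top M^\top(P_1x)=0$ as in item~2, leaving $P_1^\top M(P_2x)=0$. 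Since $P_1$ has rank $3$, $P_1^\top$ is injective, so $M(P_2x)=0$, i.e.\ $P_2x\in\ker M=\langle e_{Q_2}^1\rangle$; this says $x\in g_{P_2}^1$, with the only exception being the degenerate case $P_2x=0$, i.e.\ $x=p_2$, which already lies on $g_{P_2}^1$ by item~1. The argument starting from $x\in g_{P_2}^1$ is symmetric.

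\emph{Item 4.} Assume $S_P=H_1\cup H_2$ with $H_1\ne H_2$, so $\mathrm{Sing}(S_P)=H_1\cap H_2$ is a line. By item~1 each of $g_{P_1}^2,g_{P_2}^1$ is a line lying in $S_P$; since a line contained in a union of two planes must lie entirely in one of them, each $g$-line is contained in $H_1$ or in $H_2$. It suffices to show the two lines meet (they may coincide), as then they are coplanar. If both are contained in a common $H_k$, they meet there. Otherwise, after relabeling, at least one $g$-line — say $g_{P_1}^2$ — is contained in exactly one plane, say $g_{P_1}^2\subseteq H_1$ and $g_{P_1}^2\not\subseteq H_2$; then $z:=g_{P_1}^2\cap H_2$ is a single point lying on $H_1\cap H_2=\mathrm{Sing}(S_P)$, so item~3, applied to the singular point $z$ on $g_{P_1}^2$, gives $z\in g_{P_2}^1$, and the two lines meet at $z$. (A more conceptual route: $S_P$ is the pullback of the rank-$4$ quadratic form induced by the perfect pairing $M$ on $(\mathbb C^3/\langle e_{Q_1}^2\rangle)\times(\mathbb C^3/\langle e_{Q_2}^1\rangle)$ along the linear map $x\mapsto(P_1x,P_2x)$ reduced modulo the epipoles; if this map were injective, $S_P$ would be a smooth quadric, so when $S_P$ is reducible the map has a nonzero kernel, whose projectivization is a common point of $g_{P_1}^2$ and $g_{P_2}^1$.)

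\emph{Main obstacle.} The computations underlying items~1--3 are routine linear algebra; the step that needs genuine care is item~4 — in particular bookkeeping of the degenerate incidences, such as a $g$-line contained in, or equal to, the singular line $H_1\cap H_2$ of $S_P$ — together with the harmless exceptional point $x=p_2$ in item~3. I expect item~4 to be the part most in need of a careful write-up.
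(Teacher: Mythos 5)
Items 1--3 of your proposal are correct, and your computation for item 3 is essentially the paper's own argument (expand the gradient $P_1^\top M P_2x+P_2^\top M^\top P_1x$, kill one term using the epipole, and use injectivity of $P_1^\top$ to conclude $P_2x\in\ker M$ or $x=p_2$). The paper defers items 1--2 to \cite{HK}, so supplying the short direct verification is fine.

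Item 4 has a genuine gap: you prove the wrong statement. In this lemma, ``lie in the same plane'' means the same irreducible component of $S_P=H_1\cup H_2$ (this is how it is used later, e.g.\ in the definition of the type $(a,b,c_0,c_1,c_2)$, where $a$ is ``the degree of the curve in the plane with the epipolar lines''). Your argument only shows that $g_{P_1}^{2}$ and $g_{P_2}^{1}$ intersect, hence span a common plane of $\p3$ --- but two lines, one contained only in $H_1$ and one contained only in $H_2$, meeting at a point of the singular line $H_1\cap H_2$, satisfy everything you establish (and are fully consistent with items 1--3) while violating the actual claim. Indeed, the intersection statement is already an easy consequence of item 3, since any $g$-line inside a reducible quadric meets the singular line; if that were all item 4 asserted, the paper would not need a separate argument. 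Your parenthetical ``conceptual route'' has the same defect: a nonzero kernel again only produces a common point of the two lines. The paper closes this case differently: assuming the two lines sit in different components, it identifies the component $\Pi$ containing both camera centers, observes that $\widetilde{\phi_P}(\Pi)$ is a product of two lines of which exactly one passes through the relevant epipole of $Q$, and then checks that no component of $S_Q$ can map to that product (it would have to pass through $q_2$ but not $q_1$, and simultaneously through both $q_1$ and $q_2$). Some argument of this kind, comparing the images of the components under the two joint camera maps, is what your write-up is missing.
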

The first two properties are taken from Lemma 5.10 in \cite{HK}, the last two are neither stated nor proven in the paper. Nevertheless, the authors seem to have been aware of all four properties.
\begin{proof}
\begin{enumerate}
\item[1-2.]  See proof of Lemma 5.10 in \cite{HK}.
\item[3.] For ease of reading, we use matrix notation. As such, $S_P$, $F_Q$, and $P_i$ are represented by matrices of dimensions $4\times4$, $3\times3$ and $3\times4$ respectively. In particular, $S_P$ is represented by the \emph{symmetric} matrix $P_1^TF_QP_2+P_2^TF_Q^TP_1$.

If $x_0\in\gp{1}{2}$ lies in the singular locus of $S_P$, then we have
\begin{align*}
S_Px_0&=(P_1^{T}F_QP_2+P_2^{T}F_Q^{T}P_1)x_0,\\
&=P_1^{T}F_QP_2x_0+P_2^{T}F_Q^{T}e_{Q_1}^{2}),\\
&=P_1^{T}F_QP_2x_0.
\end{align*}
However, since $x_0$ lies in the singular locus of $S_P$, this expression is equal to zero. Since both the camera matrices are of full rank, the only way we can get zero is if $x_0=p_2$ or if $P_2x_0=e_{Q_2}^{1}$. In either case, it follows that $x_0$ lies on $\gp{2}{1}$ as well.
\item[4.] Assume there exist cameras $P_1,P_2,Q_1,Q_2$ such that $S_P$ is the union of two planes and the epipolar lines $g_{P_i}^{j}$ lie in different planes. When the quadric $S_P$ consists of two planes, one of the planes, which we denote by $\Pi$, will (by \cref{thr:critical_conf_for_two_views}) contain both camera centers. As such, the only way that the epipolar lines can lie in different planes is if one of the camera centers, say $p_2$, lies on the intersection of the two planes and the other does not (if both lie on the intersection, then by 3., the epipolar lines must both be equal to the intersection of the two planes). 

Recall that the quadric $S_P$ and its conjugate $S_Q$ (also two planes) are both pullbacks  of the surface $\im(\widetilde{\phi_P})\cap\im(\widetilde{\phi_Q})\subset\pxp$. The map $\widetilde{\phi_P}$ takes the plane $\Pi$ to the product of two lines in $\pxp$. The line in the first image passes through the epipole $e_{Q_1}^{2}$, whereas the line in the second image does not pass through the epipole $e_{Q_2}^{1}$ (this is because $\Pi$ contains one of the epipolar lines but not the other). The problem is now that neither of the planes on $S_Q$ can map to the product of these two lines, since any such plane would have to be both 
\begin{enumerate}
\item a plane passing through $q_2$ but not through $q_1$ (because in the second image, the line does not pass through $e_{Q_2}^{1}$) and
\item a plane passing through both $q_1$ and $q_2$ (because the plane maps to a line in both images),
\end{enumerate}
which gives us a contradiction. It follows that there are no $P_1,P_2,Q_1,Q_2$ such that $S_P$ is the union of two planes and the epipolar lines $g_{P_i}^{j}$ lie in different planes, so whenever $S_P$ is the union of two planes, the epipolar lines lie in the same plane. \qedhere
\end{enumerate}
\end{proof}

\begin{definition}
Let $S_P$ be a quadric surface and let $p_1,p_2$ be two distinct points on $S_P$. A pair of lines $g_{P_1}^{2},g_{P_2}^{1}$ is called \emph{permissible} if $g_{P_1}^{2},g_{P_2}^{1}$ satisfy the four conditions in \cref{lem:permissible_lines}.
\end{definition}

\begin{proposition}
\label{prop:conjugates_and_permissible_lines}
Let $P_1,P_2$ be two cameras, and let $S_P$ be a quadric passing through their camera centers. The configuration $(P_1,P_2,\widetilde{S_P})$ is critical if and only if $S_P$ contains a permissible pair of lines. Furthermore, if $p_1,p_2$ do not both lie in the singular locus of $S_P$, there is a 1:1 correspondence between permissible pairs of lines and configurations conjugate to $(P_1,P_2,\widetilde{S_P})$.
\end{proposition}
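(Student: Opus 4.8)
The plan is to work entirely through the dictionary already set up in this section. By \cref{lem:critical_if_pullback_of_rank2_form} together with \cref{lem:correspndence_between_lines_and_quadrics} and the non-triviality convention, the configuration $(P_1,P_2,\widetilde{S_P})$ is critical if and only if the line $L\subset\mathbf{F}$ corresponding to $S_P$ contains a real rank-$2$ form $F_Q\neq F_P$; and by \cref{thr:fundforms_and_camera_pairs_are_1:1} such forms are in bijection with camera pairs $(Q_1,Q_2)$, hence (modulo the degeneracies discussed below) with the conjugate configurations. So the whole statement reduces to exhibiting a bijection between real rank-$2$ forms $F_Q\neq F_P$ on $L$ and permissible pairs of lines on $S_P$, the candidate bijection being $F_Q\mapsto\bigl(P_1^{-1}(e_{Q_1}^{2}),P_2^{-1}(e_{Q_2}^{1})\bigr)$, i.e. the assignment \cref{eq:g1}--\cref{eq:g2}.

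For the forward direction: if $(Q_1,Q_2,Y)$ is a conjugate, then $F_Q$ is a real rank-$2$ form on $L$ different from $F_P$, and the lines $\widetilde{g_{P_1}^{2}},\widetilde{g_{P_2}^{1}}$ of \cref{eq:g1}--\cref{eq:g2} blow down to a pair of lines on $S_P$ which is permissible by \cref{lem:permissible_lines}. In particular a critical $S_P$ contains a permissible pair, which is the easy half of the ``iff''.

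For the converse I would begin with a permissible pair $(g_{P_1}^{2},g_{P_2}^{1})$ and set $e:=P_1(g_{P_1}^{2})\in\p2$ and $e':=P_2(g_{P_2}^{1})\in\p2$, both well defined because the two lines pass through $p_1$ and $p_2$ respectively (condition~1 of \cref{lem:permissible_lines}). The key computation is that every form $F\in L$ other than $F_P$ pulls back to $S_P$, which contains the line $g_{P_1}^{2}=P_1^{-1}(e)$; evaluating $F(P_1(x),P_2(x))$ along that line shows that the linear functional $y\mapsto F(e,y)$ vanishes on the line $P_2(g_{P_1}^{2})\subset\p2$, hence is either identically zero or a scalar multiple of the equation of that line (the exception being when $g_{P_1}^{2}$ is the line $p_1p_2$, handled separately). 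Running this along the pencil $L=\{F_P+tF_0\}$ produces a unique form $F^{\ast}\in L$ with $e$ in its left kernel, outside the degenerate case where all forms of $L$ share the right epipole. One then argues $F^{\ast}$ has rank exactly $2$: rank $1$ would force $S_P$ to be a union of two planes with $p_1,p_2$ in different planes, which admits no permissible pair by condition~4, and $F^{\ast}=F_P$ forces $g_{P_1}^{2}=p_1p_2$ again. Finally one checks that the right kernel of $F^{\ast}$ is exactly $\langle e'\rangle$ — equivalently that $F^{\ast}$ coincides with the analogous form $F^{\ast\ast}$ built from $e'$ — and this is the step where conditions~2--4 of \cref{lem:permissible_lines} are genuinely used; I expect a short case split along the classification in \cref{thr:critical_conf_for_two_views} (smooth quadric; cone with or without a camera at the vertex; the several two-plane cases, organised over the four quadric types of \cref{fig:ruled_quadrics}) to be the cleanest way to force this. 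Then $F_Q:=F^{\ast}$ is a real rank-$2$ form on $L$ with epipoles $e,e'$, so $(P_1,P_2,\widetilde{S_P})$ is critical by \cref{lem:critical_if_pullback_of_rank2_form}, and the permissible pair we started from is recovered from $F_Q$ via \cref{eq:g1}--\cref{eq:g2}.

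It remains to see that $F_Q\mapsto(e,e')$ is injective on $L$ when $p_1,p_2$ are not both singular on $S_P$: if two distinct forms of $L$ shared the same pair of epipoles, the whole pencil $L$ would lie in the $3$-plane of forms annihilating $e$ on the left and $e'$ on the right, and by \cref{thr:critical_conf_for_two_views} this occurs exactly when $S_P$ is a union of two planes with both camera centers on the singular line, i.e. precisely when $p_1,p_2$ both lie in the singular locus. Excluding that case gives injectivity, and with the surjectivity extracted from the previous paragraph, the $1:1$ correspondence between permissible pairs and conjugates follows. The main obstacle is the converse direction — proving that the four \emph{necessary} conditions of \cref{lem:permissible_lines} are jointly \emph{sufficient} to pin down a rank-$2$ form $F^{\ast}$ with the prescribed right kernel — together with treating uniformly the low-dimensional degeneracies (the line $p_1p_2$ lying on $S_P$, a camera at a vertex or on a singular line); I would discharge these as a finite check over the quadric types.
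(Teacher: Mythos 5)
Your proposal is correct in outline, but it takes a genuinely different route for the substantive direction. The paper disposes of the first claim by comparing two finite lists --- the quadrics classified as critical in \cref{thr:critical_conf_for_two_views} versus the quadrics admitting a permissible pair (``we leave this to the reader'') --- and, for the $1:1$ correspondence, argues that the non-constant map from the pencil of fundamental forms to the pencil of permissible pairs is automatically surjective, with injectivity read off from the tables. You instead construct the conjugate from the permissible pair: the observation that $F(e,-)$ vanishes on the line $P_2(g_{P_1}^{2})$ for \emph{every} $F\in L$, so that forcing $F(e,-)\equiv 0$ is a single linear condition on the pencil, is a real gain in explicitness --- it produces the candidate $F^{\ast}$ canonically and turns both surjectivity and injectivity of $F_Q\mapsto(g_{P_1}^{2},g_{P_2}^{1})$ into linear algebra rather than a count. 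What your route buys is an actual inverse map; what it costs is that the remaining verifications (that the right kernel of $F^{\ast}$ is $\langle e'\rangle$, and the degenerate sub-cases) still collapse to the same finite check over quadric types that the paper performs, so neither argument escapes the case analysis.

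Three spots need more care. (i) Your claim that a rank-$1$ value of $F^{\ast}$ forces ``two planes with cameras in different planes'' is inaccurate: rank-$1$ forms also occur on pencils entirely contained in $\mathbf{F}_2$ (both cameras on the singular line, or the double plane), and there the conjugates are supplied by the rank-$2$ members of the pencil, not by the $F^{\ast}$ your condition selects. (ii) In the sub-cases where every form of $L$ already has $e$ in its left kernel (cone with a camera at the vertex; two planes with a camera on the singular line), your linear condition is vacuous and does not single out $F^{\ast}$; you have to impose both kernel conditions at once, and this is precisely where the hypothesis that $p_1,p_2$ are not both singular on $S_P$ is consumed. (iii) The opening reduction ``critical iff $L$ contains a real rank-$2$ form $\neq F_P$'' tacitly uses that the fundamental form of a conjugate pulls back to exactly the quadric $S_P$ and hence lies on $L$; this is immediate when $S_P$ is reduced and irreducible but needs a sentence in the reducible and double-plane cases. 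None of these is fatal, and you flag most of them yourself, but as written the proposal sits at the same level of deferral as the paper's own proof rather than strictly above it.
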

\begin{proof}
The first part can be proven by comparing the quadrics in  \cref{tab:configurations_and_their_conjugates} to the set of quadrics containing the required lines, and noting that they are the same. We leave this to the reader. 

The second part is immediate for the three cases where there is a finite number of conjugates since the number of conjugates is equal to the number of pairs of epipolar lines and each conjugate comes with its unique choice of lines. For the remaining quadrics $S_P$ (cone and two planes), there exists a pencil of fundamental forms, where each $F_Q\neq F_P$ satisfies
\begin{align*}
S_P(x)=F_Q(P_1(x),P_2(x)).
\end{align*}
By the tables on pages 11 and 12, we get that each form in the pencil yields a different pair of lines as long as the camera centers do not both lie in the singular locus of $S_P$. For $F_Q\neq F_P$, the lines are permissible, while for $F_P$, the lines coincide (not permissible). Note also that on each of these quadrics, the set of permissible lines forms a set whose Zariski closure is also a pencil (the pair where the two lines coincide lie in the closure, but the pair itself is not permissible). We now have a map from the pencil of fundamental forms to the pencil of permissible lines. Since the map is not constant, it needs to be surjective, and since no two fundamental forms in the pencil share the same left and right kernel, it is also injective. Hence there is a 1:1 correspondence between permissible pairs of lines and configurations conjugate to $(P_1,P_2,S_P)$.
\end{proof}

\subsection{Maps between quadrics}
Let us now have a closer look at the map $\psi$ taking a point to its conjugate. Given two pairs of cameras $P_1,P_2$ and $Q_1,Q_2$ with camera centers $p_1,p_2$ and $q_1,q_2$ respectively, let the quadrics $S_P$ and $S_Q$ and the epipolar lines $g$ be defined as before (Equations (\ref{eq:SP_and_SQ}), (\ref{eq:g1}), and (\ref{eq:g2})). Let 
\begin{align*}
\pi_P\from\widetilde{\p3_P}\to\p3
\end{align*}
be the blow-up of $\p3$ in the two camera centers $p_1,p_2$ and let $\widetilde{S_P}$ denote the strict transform of $S_P$, and similarly for $Q$. Define the incidence variety:
\begin{align*}
I=\Set{(x,y)\in \widetilde{S_P}\times \widetilde{S_Q}\mid \widetilde{\phi_P}(x)=\widetilde{\phi_Q}(y)},
\end{align*}
where $\phi$ is the joint-camera map. If we can understand $I$, we will know the exact relation between points on one quadric and the other. We have the following the commutative diagram:

\begin{center}
\begin{tikzcd}
& I \arrow[ld, "\pi_1"'] \arrow[rd, "\pi_2"] &                                                                                                                & \subset\widetilde{\p3}\times\widetilde{\p3} \\
\widetilde{S_P} \arrow[d, "\pi_P"'] \arrow[rdd, "\widetilde{\phi_P}"] \arrow[rr, "\psi_P"', dashed, shift right] &                                            & \widetilde{S_Q} \arrow[d, "\pi_Q"] \arrow[ldd, "\widetilde{\phi_Q}"'] \arrow[ll, "\psi_Q"', dashed, shift right] & \subset\widetilde{\p3}                      \\
S_P \arrow[rd, "\phi_P"', dashed]                                                                              &                                            & S_Q \arrow[ld, "\phi_Q", dashed]                                                                               & \subset\p3                                  \\
                                                                                                               & \pxp                                       &                                                                                                                &                                            
\end{tikzcd}
\end{center}

We study $I$ by studying the fibers of the projection map $\pi_1$. For any point $x\in\widetilde{S_P}$, we have
\begin{align*}
\pi_1^{-1}(x)=l_1\cap l_2,
\end{align*}
where $l_i$ is the line $Q_i^{-1}(P_i(x))$. We will rarely refer to this formula explicitly, but it is the foundation for the analysis of the fibers.

\begin{lemma}
\label{lem:singular_quadric_one_side_means_line_on_the_other}
\begin{enumerate}
\item If $\widetilde{S_P}$ is smooth, the map $\pi_1$ is an isomorphism. 
\item The map $\psi_P$ taking a point $x\in\widetilde{S_P}$ to its conjugate is a birational map, defined everywhere except the intersection $\widetilde{\gp{1}{2}}\cap\widetilde{\gp{2}{1}}$.
\item The quadric $S_P$ is singular if and only if the quadric $S_Q$ contains the line spanned by the camera centers $q_1,q_2$.
\item For any point $x\in\widetilde{\gp{1}{2}}$ or $\widetilde{\gp{2}{1}}$ (but not both), the conjugate $\psi_P(x)$ is a point on the exceptional divisor we get when blowing up $q_2$ or $q_1$ respectively. In particular, on the blow-down $S_P\subset\p3$, the conjugate to any point $x\in\gp{1}{2}$ or $x\in\gp{2}{1}$ (apart from the camera centers themselves) is the camera center $q_2$ or $q_1$ respectively.

\end{enumerate} 
\end{lemma}
\begin{proof}
\begin{enumerate}
\item As long as $x$ does not lie in $\widetilde{\gp{1}{2}}\cap\widetilde{\gp{2}{1}}$, the two lines $l_i$ do not coincide, so the fiber $\pi_1^{-1}(x)$ consists of a single point. By \cref{lem:permissible_lines}, any point lying on this intersection is a singular point on $S_P$, so if $S_P$ is smooth, then $\widetilde{\gp{1}{2}}$ and $\widetilde{\gp{2}{1}}$ do not intersect. Then every fiber is a singleton, meaning that $\pi_1$ is injective. Furthermore, for each point $x\in\widetilde{S_P}$ there is at least one point $y\in\widetilde{S_Q}$ such that $\widetilde{\phi_P}(x)=\widetilde{\phi_Q}(y)$, so $\pi_1$ is surjective as well.
\item As mentioned above, $\pi_1^{-1}(x)$ is a singleton if $x$ does not lie in $\widetilde{\gp{1}{2}}\cap\widetilde{\gp{2}{1}}$. If it does, on the other hand, the fiber is a line $L\in I$ such that $\pi_2(L)$ is the strict transform of the line spanned by the camera centers $q_1,q_2$, so in this case the conjugate is not unique. Since the same is true for the map taking a point $y\in\widetilde{S_Q}$ to its conjugate, the map is birational.

\item If $S_P$ is singular, the epipolar line $\gp{1}{2}$ passes through some singular point $x\in\widetilde{S_P}$ (as do all other lines, see \cref{fig:ruled_quadrics}). By \cref{lem:permissible_lines}, the other epipolar line $\gp{2}{1}$ passes through the same point. Since the two epipolar lines intersect, the quadric $\widetilde{S_Q}$ must contain the line spanned by the camera centers.

Conversely, if $S_Q$ contains the line spanned by the camera centers, the conjugate to any point on the strict transform of this line is the intersection $\widetilde{\gp{1}{2}}\cap\widetilde{\gp{2}{1}}$ on $\widetilde{S_P}$. By \cref{lem:permissible_lines}, the blow-down of these points are always singular points on $S_P$. 

\item Let $x$ be a point lying on the epipolar line $\widetilde{\gp{1}{2}}$, but not on $\widetilde{\gp{2}{1}}$. We have $P_1(x)=e_{Q_1}^2$ and $P_2(x)\neq e_{Q_2}^1$. The conjugate to $x$ is then
\begin{align*}
\psi(x)=l_1\cap l_2=Q_1^{-1}(P_1(x))\cap Q_2^{-1}(P_2)(x).
\end{align*}
The line $l_1$ is here the line spanned by the camera centers $q_1,q_2$ , while $l_2$ is a different line, passing through $q_2$. Their intersection must then be the camera center $q_2$. The proof for the other epipolar line is the same.
\qedhere
\end{enumerate}
\end{proof}

\begin{remark}
\label{rem:method_for_computing}
When the two pairs cameras $P_1,P_2$ and $Q_1,Q_2$ are known, one can explicitly compute the map $\psi_P$ as a rational function. For some representatives of a general point $X\in S_P$ and its conjugate $\psi_P(X)\in S_Q$, we have
\begin{align*}
\underbrace{\begin{pmatrix}
Q_1&P_1X&0\\
Q_2&0&P_2X
\end{pmatrix}}_{A}\cdot\begin{pmatrix}
\psi_Q(X)\\1\\1
\end{pmatrix}=\begin{pmatrix}
P_1&Q_1\psi_Q(X)&0\\
P_2&0&Q_2\psi_Q(X)
\end{pmatrix}\cdot\begin{pmatrix}
X\\1\\1
\end{pmatrix}=0.
\end{align*}
Using the adjugate of $A$, we can get an expression for $\psi_Q$ as a rational function in the homogeneous coordinates of $X$.
\end{remark}

Let us now give two results on how $\phi_P$ acts on the curves on $S_P$:

\begin{definition}
\label{def:type_of_curve}
Let $S_P$ be a smooth quadric or cone and let $C_P$ be a curve on $S_P$. We say that $C_P$ is of type $(a,b,c_1,c_2)$, where $c_i$ is the multiplicity of $C_P$ in the camera center $p_i$ and where $a$ and $b$ is:
\begin{itemize}
\item If $S_P$ is smooth, $a$ is the number of times $C_P$ intersects a generic line in the same family as the epipolar lines $g_{P_i}^{j}$, and $b$ is the number of times it intersects the lines in the other family, (meaning $(a,b)$ is the bidegree of $C_P$).
\item If $S_P$ is a cone, $a$ is the number of times $C_P$ intersects each line outside of the vertex, and $b$ the number of times it intersects each line.
\end{itemize}
\end{definition}

\begin{proposition}[{\cite[Lemma 8.32]{HK}}]
\label{prop:bidegree_on_P_->_bidegree_on_Q}
Let $S_P$ be a smooth quadric or a cone, and let $C_P\subset S_P$ be a curve of type $(a,b,c_1,c_2)$, such that $C_P$ does not contain either of the epipolar lines. Then the conjugate curve $C_Q\subset S_Q$ is of type $(a,a+b-c_1-c_2,a-c_2,a-c_1)$.
\end{proposition}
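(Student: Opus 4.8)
The plan is to run the whole argument inside $\mathrm{Pic}(\widetilde{S_P})$ and $\mathrm{Pic}(\widetilde{S_Q})$, using the isomorphism $\psi_P$ together with the blow-up data already recorded in the previous subsections. First I would translate ``type'' into a divisor class. If $\widetilde{S_P}$ is smooth, a curve $C_P$ of type $(a,b,c_1,c_2)$ has class $\widetilde{C_P}=bL_1+aL_2-c_1E_1-c_2E_2$ in the basis $\{L_1,L_2,E_1,E_2\}$; this is immediate from the multiplication table, since $\widetilde{C_P}\cdot L_1=a$, $\widetilde{C_P}\cdot L_2=b$, $\widetilde{C_P}\cdot E_i=c_i$. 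If $\widetilde{S_P}$ is a cone, the same bookkeeping in the basis $\{L,E_0,E_1,E_2\}$ gives $\widetilde{C_P}=(a+b)L+aE_0-c_1E_1-c_2E_2$; I would sanity-check this against reference curves (a generic ruling has type $(0,1,0,0)$ and class $L$, a plane section of the cone has type $(1,1,0,0)$ and class $2L+E_0$, an epipolar line $g_{P_1}^{2}$ has type $(0,1,1,0)$ and class $L-E_1$). The hypothesis that $C_P$ contains neither epipolar line is used precisely here: it guarantees that the pushforward $(\psi_P)_{*}\widetilde{C_P}$ is exactly the class of the conjugate curve $C_Q$, with no component collapsed to a camera center and none omitted.

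Next I would assemble the dictionary between $\mathrm{Pic}(\widetilde{S_P})$ and $\mathrm{Pic}(\widetilde{S_Q})$. The class $H_Q$ of the $\psi_P$-pullback of a hyperplane section of $S_Q$ has already been computed: $H_Q=2L_1+L_2-E_1-E_2$ in the smooth case and $H_Q=3L+E_0-E_1-E_2$ in the cone case. To this I would add the two geometric facts from the discussion of maps between quadrics: the epipolar lines $g_{P_i}^{j}$ (of class $L_1-E_i$, resp.\ $L-E_i$) are contracted by $\pi_Q$ to the camera centers $q_i$, so $\psi_P$ carries them onto the exceptional divisors $F_i$ of $\widetilde{S_Q}$; and the exceptional divisors $E_i$ of $\widetilde{S_P}$ are carried onto the epipolar lines of $\widetilde{S_Q}$. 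Together with $H_Q$ and careful tracking of which index maps to which (there is a swap, $g_{P_1}^{2}\mapsto F_2$, $g_{P_2}^{1}\mapsto F_1$), this pins down $\psi_P^{*}M_1,\psi_P^{*}M_2,\psi_P^{*}F_1,\psi_P^{*}F_2$ in the $\widetilde{S_P}$-basis; for instance in the smooth case one gets $\psi_P^{*}M_1=L_1$, $\psi_P^{*}M_2=L_1+L_2-E_1-E_2$, $\psi_P^{*}F_1=L_1-E_2$, $\psi_P^{*}F_2=L_1-E_1$, and I would confirm these preserve the intersection pairing as a consistency check.

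Finally, the type $(a',b',c_1',c_2')$ of $C_Q$ is recovered from $a'=\widetilde{C_Q}\cdot M_1$, $b'=\widetilde{C_Q}\cdot M_2$, $c_i'=\widetilde{C_Q}\cdot F_i$, and since $\psi_P$ is an isomorphism each of these equals $\widetilde{C_P}\cdot\psi_P^{*}M_1$, $\widetilde{C_P}\cdot\psi_P^{*}M_2$, $\widetilde{C_P}\cdot\psi_P^{*}F_i$ respectively. Substituting the class of $\widetilde{C_P}$ and the pulled-back generators is then a one-line intersection-number computation yielding $(a',b',c_1',c_2')=(a,a+b-c_1-c_2,a-c_2,a-c_1)$; repeating it with the cone values of $H_Q$ and the corresponding classes gives the identical formula. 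I expect the real difficulty to be not any single computation but the combinatorial bookkeeping: fixing the index conventions for the correspondence between epipolar lines and exceptional divisors, handling the $(-2)$-curve $E_0$ correctly in the cone case (the vertex must be blown up precisely because the two epipolar lines meet there), and making sure that every sub-case of Table~\ref{tab:configurations_and_their_conjugates} with $S_P$ smooth or a cone --- smooth$\leftrightarrow$smooth, smooth$\leftrightarrow$cone and cone$\leftrightarrow$cone --- is covered by running the same argument with the Picard basis appropriate to $\widetilde{S_Q}$.
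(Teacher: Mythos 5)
Your proposal is correct and follows essentially the same route as the paper: the paper states the proposition as an immediate consequence of the preceding Picard-group computations (the intersection tables, the classes $H_Q=2L_1+L_2-E_1-E_2$ and $H_Q=3L+E_0-E_1-E_2$, and the behaviour of epipolar lines and exceptional divisors under $\psi_P$), and your plan simply writes out that intersection-number bookkeeping explicitly; I checked your pulled-back generators and the resulting computation in both the smooth and cone cases and they yield exactly $(a,a+b-c_1-c_2,a-c_2,a-c_1)$. Your reading of the cone-type convention ($a$ counts intersections with a ruling away from the vertex, $b$ the total), verified against the ruling and plane-section sanity checks, is the one that makes the paper's definition consistent with the stated formula.
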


An illustration of how $\psi_P$ acts on the lines on a quadric/cone can be found in Figures \ref{fig:conjugates1} and \ref{fig:conjugates2}.

\begin{definition}
Let $S_P$ be two planes with two camera centers not both lying on the intersection and let $C_P$ be a curve on $S_P$. We say that $C_P$ is of type $(a,b,c_0,c_1,c_2)$, where $a$ is the degree of the curve in the plane with the epipolar lines, $b$ the degree of the curve in the other plane, $c_0$ is the multiplicity of $C_P$ in the intersection of the epipolar lines $g_{P_i}^{j}$ and $c_1,c_2$ is the multiplicity in the camera centers $p_1,p_2$ respectively. 
\end{definition}

\begin{proposition}
\label{prop:bidegree_on_P_->_bidegree_on_Q_planes}
Let $S_P$ be two planes with two camera centers not both lying on the intersection, and let $C_P\subset S_P$ be a curve of type $(a,b,c_0,c_1,c_2)$ such that $C_P$ does not contain either of the epipolar lines or the line spanned by the camera centers. Then the conjugate curve $C_Q\subset S_Q$ is of type $(2a-c_0-c_1-c_2,b,a-c_1-c_2,a-c_0-c_2,a-c_0-c_1)$.
\end{proposition}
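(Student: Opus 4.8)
The plan is to reduce the statement to the geometric picture already assembled in the subsection on two planes with at most one camera center on the intersection, together with the earlier structural results. First I would fix the geometry using \cref{thr:critical_conf_for_two_views} and \cref{lem:permissible_lines}(4): when $S_P$ is a union of two planes with $p_1,p_2$ not both on the singular line, exactly one of its planes, call it $\Pi$, contains both camera centers together with both epipolar lines, while the other plane $\Pi'$ carries no epipolar line; $S_Q$ has the same shape, and $\psi_P$ sends $\Pi$ to the plane of $S_Q$ carrying the epipolar lines and, away from the blown-up camera centers, restricts to an isomorphism from $\Pi'$ onto the other plane of $S_Q$. This settles the second entry $b$ of the type of $C_Q$: the part of $C_P$ in $\Pi'$ goes over with degree unchanged. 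Everything else concerns $\psi_P$ restricted to $\Pi$.

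On $\Pi$ I would work on the common resolution recorded earlier: blow up the point $g_{P_1}^{2}\cap g_{P_2}^{1}$, producing the exceptional class $E_0$, then blow up $p_1$ and $p_2$, producing $E_1,E_2$, and let $L$ be the total transform of a line of $\Pi$, so that $L^2=1$, $E_i^2=-1$ and all mixed products vanish. The subsection cited above already establishes that the pullback under $\psi_P$ of a line section of the plane of $S_Q$ carrying the epipolar lines has class $H_Q=2L-E_0-E_1-E_2$. Because $C_P$ contains neither epipolar line nor the line $\overline{p_1p_2}$, none of its components is contracted by $\psi_P$, and the strict transform of $C_P$ on this resolution has class $aL-c_0E_0-c_1E_1-c_2E_2$, where $a$ is the degree of $C_P$ in $\Pi$ and $c_0,c_1,c_2$ are its multiplicities at $g_{P_1}^{2}\cap g_{P_2}^{1}$, $p_1$, $p_2$.

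Then I would read off the five entries of the type of $C_Q$ as intersection numbers on this resolution, where $\psi_P$ is a genuine birational morphism defined by $|H_Q|$. Since $C_P$ is not contracted, the degree of $C_Q$ in the plane of $S_Q$ carrying the epipolar lines is $H_Q\cdot(aL-c_0E_0-c_1E_1-c_2E_2)=2a-c_0-c_1-c_2$. For the three multiplicities I would use the standard fact that if a birational morphism of smooth surfaces contracts an irreducible curve $e$ to a point $q$, then the multiplicity at $q$ of the image of any curve avoiding $e$ equals its intersection number with $e$. The curves contracted by $\psi_P$ inside $\Pi$ are exactly the three $(-1)$-curves orthogonal to $H_Q$: the line $\overline{p_1p_2}$ of class $L-E_1-E_2$, collapsing to the intersection point of the epipolar lines of $S_Q$, and the two epipolar lines $g_{P_1}^{2}$, $g_{P_2}^{1}$ of classes $L-E_0-E_1$, $L-E_0-E_2$, collapsing to the camera centers $q_2$ and $q_1$ respectively (the targets are those identified in the same subsection and in the fiber analysis of \cref{lem:singular_quadric_one_side_means_line_on_the_other}). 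Intersecting the class of $C_P$ with $L-E_1-E_2$, $L-E_0-E_2$, $L-E_0-E_1$ gives $a-c_1-c_2$, $a-c_0-c_2$, $a-c_0-c_1$, which are the remaining three entries, yielding the type $(2a-c_0-c_1-c_2,\,b,\,a-c_1-c_2,\,a-c_0-c_2,\,a-c_0-c_1)$.

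The difficulty is bookkeeping rather than deep geometry. One must pin down which $(-1)$-curve collapses to which marked point of $S_Q$, in particular the index swap $g_{P_1}^{2}\mapsto q_2$, $g_{P_2}^{1}\mapsto q_1$ that also appears in \cref{prop:bidegree_on_P_->_bidegree_on_Q}, and one must apply the multiplicity principle on the resolution, where $\psi_P$ is a morphism, not on $\Pi$, where it is only rational. The genuinely delicate case is when one camera center, say $p_2$, lies on the singular line: there $g_{P_1}^{2}=\overline{p_1p_2}$ and the point $g_{P_1}^{2}\cap g_{P_2}^{1}$ coincides with $p_2$, so the two blow-up centers on $\Pi$ are no longer distinct and the resolution must be taken with an infinitely near point. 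I would handle that sub-case separately, re-deriving $H_Q$ and the class of $C_P$ on the correct model and checking that the same formula results, now with the forced relation $c_0=c_2$ (matched on the $S_Q$ side, since $q_2$ then lies on the singular line of $S_Q$).
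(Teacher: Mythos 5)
Your proposal is correct and follows exactly the route the paper sets up: the paper states this proposition without an explicit proof, relying on the Picard-group computation $H_Q=2L-E_0-E_1-E_2$ and the identification of the contracted curves in the preceding subsection, and your argument is the natural completion of that setup, with the index swap $g_{P_1}^{2}\mapsto q_2$, $g_{P_2}^{1}\mapsto q_1$ and the intersection numbers all checking out. Your separate treatment of the degenerate sub-case where one camera center lies on the singular line (so that the blow-up centers collide and an infinitely near point is needed) is in fact more careful than the paper, which glosses over it.
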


The goal of the remainder of this section will be to give a rigorous proof of Lemma 8.32 in \cite{HK} and to generalize this result to also hold for singular quadrics (that is, to prove \cref{prop:bidegree_on_P_->_bidegree_on_Q,prop:bidegree_on_P_->_bidegree_on_Q_planes}). While the method described in \cref{rem:method_for_computing} gives us an explicit expression for $\psi_P$, using this to prove \cref{prop:bidegree_on_P_->_bidegree_on_Q,prop:bidegree_on_P_->_bidegree_on_Q_planes} is somewhat difficult. We instead switch our approach to a more classical one, using intersection theory. The map $\psi_P$ taking a point on $\widetilde{S_P}$ to its conjugate can be described by describing the pullback of the hyperplane sections of $\widetilde{S_Q}$. For such hyperplane sections $H$, the pullbacks $\psi_P^{-1}(H)$ will be curves on $\widetilde{S_P}$. Finding the class of $\psi_P^{-1}(H)$ in the Chow ring will give us the information we need about $\psi_P$. We refer the reader to \cite{eisenbud_harris_2016} (chapter 2 in particular) for the basics on intersection theory used in the rest of the section.

By \cref{lem:singular_quadric_one_side_means_line_on_the_other}, the map $\psi$ is defined everywhere except the intersection of the two epipolar lines, so if we assume that they do not intersect, we get the diagram below:
\begin{center}
\begin{tikzcd}
\widetilde{\p3_P} \supset \widetilde{S_P} \arrow[d, "\pi_P"', shift left=5] \arrow[r, "\psi_P"', shift right] & \widetilde{S_Q}\subset \widetilde{\p3_Q} \arrow[d, "\pi_Q", shift right=5] \arrow[l, "\psi_Q"', shift right] \\
\p3 \supset S_P \arrow[r, dashed, shift right]                                                                & S_Q\subset \p3 \arrow[l, dashed, shift right]                                                               
\end{tikzcd}
\end{center}
If either pair of epipolar lines DO intersect, however (this happens whenever the quadrics are not smooth), then the map $\psi$ is not a morphism, but rather a rational map as it is not defined on the intersection. This can be mended by first blowing up the intersection of the epipolar lines, let
\begin{align*}
\pi_{g_P}\from\widetilde{\p3}\to\p3
\end{align*}
denote this blow-up, and let $\pi_P$, like before, be the blow-up in the two camera centers $p_i$. Let $\overline{S_P}$ be the strict transform of $S_P$ after the first blow-up, and let $\widetilde{S_P}$ be the strict transform of $\overline{S_P}$ after the second. And similarly for $Q$. We then get the following diagram:
\begin{center}
\begin{tikzcd}
\widetilde{\p3_P} \supset \widetilde{S_P} \arrow[r, "\psi_P"', shift right] \arrow[d, "\pi_P"', shift left=5] & \widetilde{S_Q}\subset\widetilde{\p3_Q} \arrow[l, "\psi_Q"', shift right] \arrow[d, "\pi_Q", shift right=5] \\
\widetilde{\p3}\supset\overline{S_P} \arrow[r, dashed, shift right] \arrow[d, "\pi_{g_P}"', shift left=5]     & \overline{S_Q}\subset\widetilde{\p3} \arrow[l, dashed, shift right] \arrow[d, "\pi_{g_Q}", shift right=5]   \\
\p3 \supset S_P \arrow[r, dashed, shift right]                                                                & S_Q\subset\p3 \arrow[l, dashed, shift right]                                                               
\end{tikzcd}
\end{center}

We can now cover the different cases one by one, but first, let us give a result which will be helpful to determine the intersection multiplicities of curves on $\widetilde{S_P}$:

\begin{proposition}[{\cite[Proposition 2.19]{eisenbud_harris_2016}}]
\label{Prop:eisenbud}
Let $S$ be a smooth projective surface and $\pi\from\widetilde{S}\to S$ the blow-up of $S$ at a point $p$; let $E\in A^1(S)$ be the class of the exceptional divisor.
\begin{enumerate}
\item $A(\widetilde{S})=A(S)\oplus\mathbb{Z}E$ as abelian groups.
\item $\pi^{*}\alpha\cdot\pi^{*}\beta=\pi^{*}(\alpha\beta)$ for any $\alpha,\beta\in A^{1}(S)$.
\item $E\cdot\pi^{*}\alpha=0$ for any $\alpha\in A^{1}(S)$.
\item $E^2=-1$.
\end{enumerate}
\end{proposition}
\subsubsection*{Smooth quadric}
We start by describing the map $\psi_P$ in the case where the quadric $S_P$ is smooth, i.e. a hyperboloid of one sheet. Let $L_2$ be the class of the total transform of a line in the same family as $g_{P_i}^{j}$, and let $L_1$ be the class of the total transform of a line in the other family. Let $E_i$ be the class of the exceptional divisor we get when blowing up $p_i$. 

On $S_P$, two lines from the same family do not intersect, whereas two lines in different families intersect once. By \cref{Prop:eisenbud} (2) this is also the case on $\widetilde{S_P}$, by (3) $L_i$ will not intersect $E_i$, and by (4), $E_i^2=-1$. We get the following intersection multiplicity table:

\begin{table}[h]
\begin{center}
\begin{tabular}{@{}|c|llll@{}}
\toprule
$\cdot$ & $L_1$ & $L_2$ & $E_1$ & $E_2$ \\ \midrule
$L_1$   & 0   & 1   & 0   & 0   \\
$L_2$   & 1   & 0   & 0   & 0   \\
$E_1$   & 0   & 0   & -1  & 0   \\
$E_2$   & 0   & 0   & 0   & -1  \\ \bottomrule
\end{tabular}
\end{center}
\end{table}

Next, let $H_Q$ be the pullback of a hyperplane section from the conjugate configuration $\widetilde{S_Q}$. We want to find the class of $H_Q$ on $\widetilde{S_P}$. 

First, let us consider what the map $\psi_P$ does to lines in each family. Let $l$ be a generic curve in $L_2$, in each image, $P_i(l)$ is a line, furthermore, since $l$ does not intersect $\widetilde{g_{P_i}^{j}}$, $P_i(l)$ is a line not passing through the epipole $e_{P_i}^{j}\in\p2$. It follows that $\psi(l)$ is a curve appearing as the intersection of $\widetilde{S_Q}$ and two planes (the preimages $Q_i^{-1}(P_i(l))$), each passing through exactly one camera center. In other words: $\psi(l)$ is a line, which means it intersects a generic hyperplane once. For a generic curve $l'$ in $L_1$, $P_i(l')$ is a line passing through the epipole, hence $\psi(l')$ is a curve appearing as the intersection of $\widetilde{S_Q}$ and two planes, both passing through the camera centers, in other words, a conic curve. This means $\psi(l')$ will intersect a generic hyperplane twice. 

Furthermore, by \cref{lem:singular_quadric_one_side_means_line_on_the_other} (4), the lines $g_{P_i}^{j}$, belonging to the class $L_2-E_i$, map to camera centers on $S_Q$. These will not intersect a generic hyperplane. Finally, $H_Q$ has self-intersection 2. This gives us the following equations: 
\begin{align*}
H_QL_2=1, \quad H_QL_1=2, \quad H_Q(L_2-E_i)=0, \quad H_Q^{2}=2.
\end{align*}
Let $H_Q=\alpha_1 L_1+\alpha_2 L_2+\alpha_3 E_1+ \alpha_4 E_2$. Using the intersection multiplicity table, we solve equations above for $\alpha_i$ and get $H_Q=L_1+2L_2-E_1-E_2$. In other words, the hyperplane sections on $\widetilde{S_Q}$ pull back to curves of bidegree $(1,2)$ passing through both camera centers.

\subsubsection*{Cone}
Next, let $S_P$ be a cone. Since the two epipolar lines intersect in the vertex, we first blow up $\p3$ in the vertex of the cone and then in the camera centers (see diagram above). Let $L$ be the class of the total transform (with the second blow-up) of the strict transform (with the first blow-up) of a line on $S_P$, let $E_0$ be the class of the exceptional divisor which is the blow-up of the vertex and let $E_i$ be the class of the exceptional divisor we get when blowing up $p_i$.

Since the cone is singular, we can not apply \cref{Prop:eisenbud} to the first blow-up, we need a separate argument. Blowing up the vertex of the cone, and taking the strict transform of the lines from $S_P$, we get curves $L$ which no longer intersect one another, so $L^2=0$, they do, however, intersect the exceptional divisor $E_0$, so $LE_0=1$. Moreover, a hyperplane section is of class $H=2L+E_0$, blowing up the cone, the resulting surface still has degree 2, so $H^2=2$, it follows that $E_0^2=-2$. When we next blow up the two camera centers we are blowing up a smooth surface, so \cref{Prop:eisenbud} applies, we get the following intersection multiplicity table:

\begin{table}[h]
\begin{center}
\begin{tabular}{@{}|c|llll@{}}
\toprule
$\cdot$ & $L$ & $E_0$ & $E_1$ & $E_2$ \\ \midrule
$L$     & 0   & 1   & 0   & 0   \\
$E_0$   & 1   & -2   & 0   & 0   \\
$E_1$   & 0   & 0   & -1  & 0   \\
$E_2$   & 0   & 0   & 0   & -1  \\ \bottomrule
\end{tabular}
\end{center}
\end{table}

Again, let $H_Q$ be the pullback of a hyperplane from the conjugate configuration $\widetilde{S_Q}$. By arguments similar to the smooth quadric case we get the following equations:
\begin{align*}
H_QL=1, \quad H_Q(L-E_i)=0, \quad H_Q^{2}=2.
\end{align*}
Furthermore, if neither camera center on $S_P$ is on the vertex, the conjugate quadric $S_Q$ is a smooth quadric with both cameras lying on the same line, the preimage of this line (under $\psi_P$) is $E_0$, hence $H_QE_0=1$. On the other hand, if one of the cameras, say $p_1$, lies on the vertex, $S_Q$ is a cone and the strict transform $E_0-E_1$ collapses to the vertex on $S_Q$, meaning $H_Q(E_0-E_1)=0$, so $H_QE_0=1$ in this case also.

It follows that $H_Q=3L+E_0-E_1-E_2$ in both cases.

\subsubsection*{Two planes, at most one camera center on the intersection}
Let $S_P$ be the union of two planes with at most one camera center lying on the intersection. In this case, $S_Q$ is reducible as well and $\psi_P$ takes the plane with the camera centers $p_i$ to the plane with the camera centers $q_i$ and the plane with no camera centers to the plane with no camera centers. As such we can consider the planes separately. The map $\psi_P$ restricted to the plane with no camera centers is an isomorphism, so we need only consider how $\psi$ acts on the plane with the camera centers. 

Since the two epipolar lines $g_{P_i}^{j}$ intersect in a point, we blow up this point, and then the camera centers to get a morphism $\psi_P$. Let $L$ be the class of the total transform of a line in the plane containing the camera centers, let $E_i$ be the class of the exceptional divisor we get when blowing up $p_i$, and let $E_0$ be the class of the exceptional divisor we get when blowing up the intersection of the epipolar lines. The plane is smooth, so by \cref{Prop:eisenbud} we get the following intersection multiplicity table:

\begin{table}[h]
\begin{center}
\begin{tabular}{@{}|c|llll@{}}
\toprule
$\cdot$ & $L$ & $E_0$ & $E_1$ & $E_2$ \\ \midrule
$L$     & 1   & 0   & 0   & 0   \\
$E_0$   & 0   & -1  & 0   & 0   \\
$E_1$   & 0   & 0   & -1  & 0   \\
$E_2$   & 0   & 0   & 0   & -1  \\ \bottomrule
\end{tabular}
\end{center}
\end{table}

Again, let $H_Q$ be the pullback of a hyperplane from the conjugate configuration $\widetilde{S_Q}$. The strict transform of the line spanned by the two camera centers on $S_P$ is mapped to the intersection of the epipolar lines on $S_Q$, similarly, the two epipolar lines map to camera centers. There are three corresponding lines on $S_Q$ whose strict transform maps to points on $S_P$. Lastly, since we are working with a single plane, $H_Q$ has self-intersection 1 this time. This gives us the equations:
\begin{align*}
&H_Q(L-E_1-E_2)=0, \quad\quad &&H_Q(L-E_0-E_1)=0, \quad\quad H_Q(L-E_0-E_2)=0,\\
&H_QE_i=1, \quad\quad &&H_Q^{2}=1.
\end{align*}
It follows that $H_Q=2L-E_0-E_1-E_2$.

\subsubsection*{One or two planes, both cameras in singular locus}
Finally, there is the case where $S_P$ (and hence $S_Q$) is either a double plane or two planes with both cameras lying on their intersection. In this case, the map $\psi_P$ is not defined on the line spanned by the two camera centers $p_1,p_2$ as any point on this line is conjugate to any point on the line spanned by $q_1,q_2$. Outside this line, $\psi_P$ acts simply as a linear transformation on each of the two (or one) planes. Note however that in the case of two planes the two linear transformations need not coincide on the intersection, for instance, two intersecting lines, one from each plane, might be taken to two disjoint lines. 
\begin{remark}
In the case where $S_P$ is a double plane, $\psi_P$ acts as a linear transformation on $S_P$ outside the line  spanned by the camera centers. As such, it is in many ways similar to the trivial critical configurations, although it is, by \cref{def_non_trivial_configuration}, non-trivial.
\end{remark}

With all these results in place, we can now prove \cref{prop:bidegree_on_P_->_bidegree_on_Q,prop:bidegree_on_P_->_bidegree_on_Q_planes}. Recall that \cref{def:type_of_curve} defines the type of a curve to be $(a,b,c_1,c_2)$ where $c_1$ and $c_2$ is the multiplicity in each camera center, and where $a$ and $b$ is the number of times it intersects the epipolar lines and the other lines respectively (or in the case of a cone, the number of times it intersects a generic line outside the vertex, and the number of times it does so in total respectively). In other words, a curve of type $(a,b,c_1,c_2)$ belongs to the class $aL_1+bL_2-c_1E_1-c_2E_2$ on a smooth quadric, whereas on the cone, it belongs to the class $(a+b)L+aE_0-c_1E_1-c_2E_2$.

\begin{proof} [Proof of \cref{prop:bidegree_on_P_->_bidegree_on_Q}]
Let $C_P\subset S_P$ be a curve of type $(a,b,c_1,c_2)$ and let $C_Q\subset S_Q$ be of type $(a',b',c_1',c_2')$. By \cref{lem:singular_quadric_one_side_means_line_on_the_other} (4), the pullback of the camera center $q_2\in S_Q$ is the epipolar line $g_{P_1}^{2}$, which belongs to the class $L_2-E_1$. $C_P$ intersects this line $a-c_1$ times. It follows that $c_2'=a-c_1$, similarly, $c_1'=a-c_2$.

Assume now that $S_P$ is smooth. A hyperplane section on $S_Q$ is of type $(1,1,0,0)$, we have shown above that the pullback of such a section is of type $(1,2,1,1)$ on $S_P$. Certain hyperplane sections of $S_Q$ are reducible, reducing into two components of type $(1,0,0,0)$ and $(0,1,0,0)$, with each component moving in a pencil. Their pullback should do the same. The only way for a $(1,2,1,1)$-curve to reduce into two such components is as $(0,1,0,0)$ and $(1,1,1,1)$, since bidegreee $(0,2)$ would mean it is reducible, and the bidegree $(0,1)$-component passing through a camera center would mean it can not move. This means that the pullback of the $(1,0,0,0)$ component is of type $(1,1,1,1)$ (since this is the component intersecting the epipolar lines, it should correspond to the component through the camera centers on the other side), similarly, the pullback of the $(0,1,0,0)$-component is of type $(0,1,0,0)$. It follows that $a'=a$ and $b'=a+b-c_1-c_2$, meaning $C_Q$ is of type $(a,a+b-c_1-c_2,a-c_2,a-c_1)$.

The proof in the case where $S_P$ is a cone follows a similar pattern: in this case, a hyperplane section $(1,1,0,0)$ on $S_Q$ pulls back to a curve in the class $3L+E_0-E_1-E_2$, that is, a curve of type $(1,2,1,1)$. Regardless of whether $S_Q$ is a cone or smooth quadric, there are (like in the previous case) hyperplane sections of $S_Q$ which reduce to two irreducible components, each free to move in a pencil. Similarly, their pullback must be reducible into two such components. The only way a curve in $3L+E_0-E_1-E_2$ reduces in such a way is as $L$ and $2L+E_0-E_1-E_2$ since $3L$ and $2L$ would both be further reducible, and the curve in class $L$ passing through a camera center would prevent it from moving. $L$ is of type $(0,1,0,0)$ and $2L+E_0-E_1-E_2$ is of type $(1,1,1,1)$. Following the same argument as in the smooth quadric case, $C_Q$ is of type $(a,a+b-c_1-c_2,a-c_2,a-c_1)$ in this case also.
\end{proof}

\begin{proof}[Proof of \cref{prop:bidegree_on_P_->_bidegree_on_Q_planes}]
Let $C_P\subset S_P$ be a curve of type $(a,b,c_0,c_1,c_2)$ and let $C_Q\subset S_Q$ be of type $(a',b',c_0',c_1',c_2')$. By \cref{lem:singular_quadric_one_side_means_line_on_the_other} (4) the pullback of the camera center $q_2\in S_Q$ is the epipolar line $g_{P_1}^{2}$, which belongs to the class $L-E_0-E_1$. $C_P$ intersects this line $a-c_0-c_1$ times. It follows that $c_2'=a-c_0-c_1$, similarly, we get $c_1'=a-c_0-c_2$. Moreover, by \cref{lem:singular_quadric_one_side_means_line_on_the_other} (3) the pullback of the intersection of the epipolar lines, is the lines spanned by the two camera centers (class $L-E_1-E_2$), it follows that $c_0'=a-c_1-c_2$.

A generic line in the plane containing the camera centers on $S_Q$ pulls back to a curve in the class $2L-E_0-E_1-E_2$ on $S_P$, $C_P$ intersects this $2a-c_0-c_1-c_2$ times, it follows that $a'=2a-c_0-c_1-c_2$. Lastly, $\phi_P$ is an isomorhpism on the final plane, so $b'=b$. This means that $C_Q$ is of type $(2a-c_0-c_1-c_2,b,a-c_1-c_2,a-c_0-c_2,a-c_0-c_1)$.
\end{proof}

\begin{figure}[]
\begin{center}
\includegraphics[width = 0.8\textwidth]{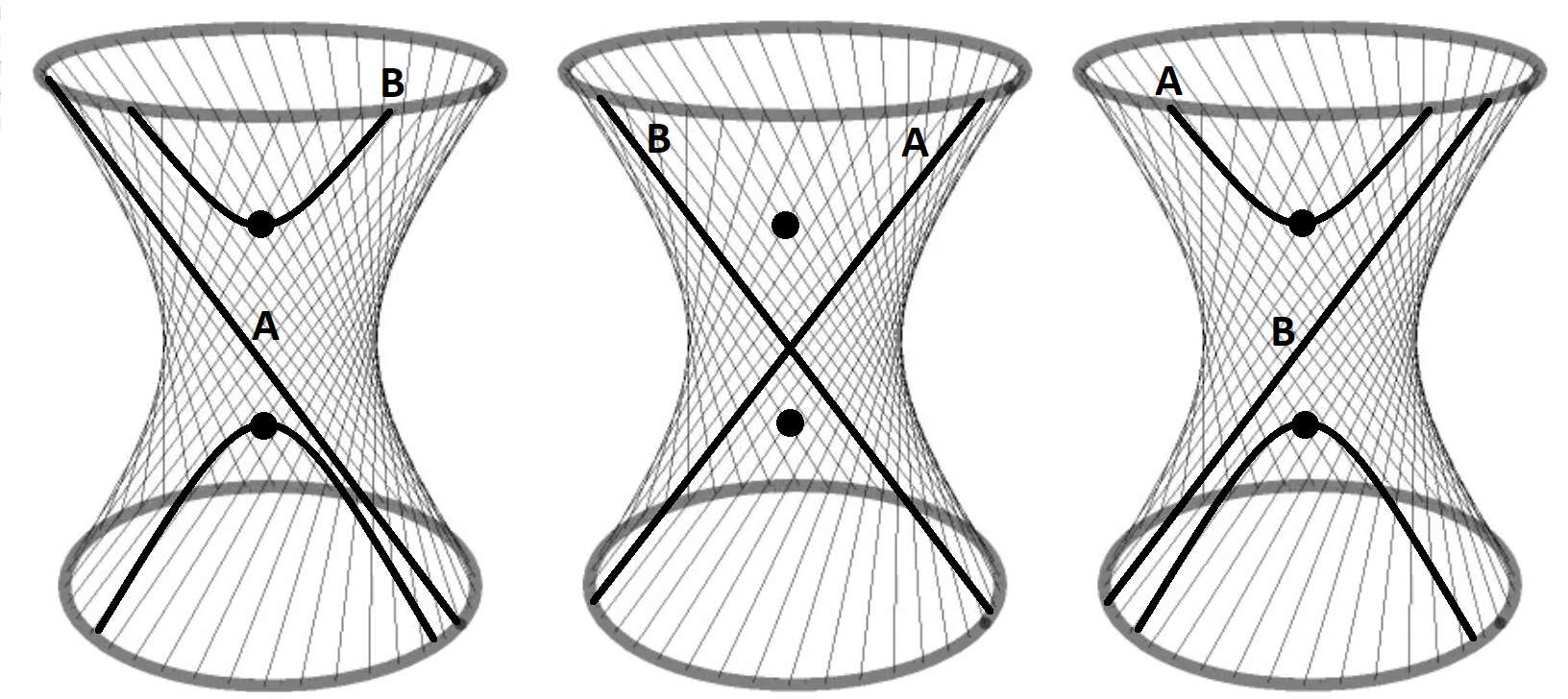}
\end{center}
\caption{Illustration of the map taking a point to its conjugate. The smooth quadric has two conjugates (original in the middle), the left is the one we get if we take the lines in the "A" family to be the epipolar lines, whereas the one on the right is the one we get if we choose "B". In both cases the lines in the family with the epipolar lines are preserved, whereas the lines in the other family are mapped to conic curves.}
\label{fig:conjugates1}
\end{figure}

\begin{figure}[]
\begin{center}
\includegraphics[width = 0.60\textwidth]{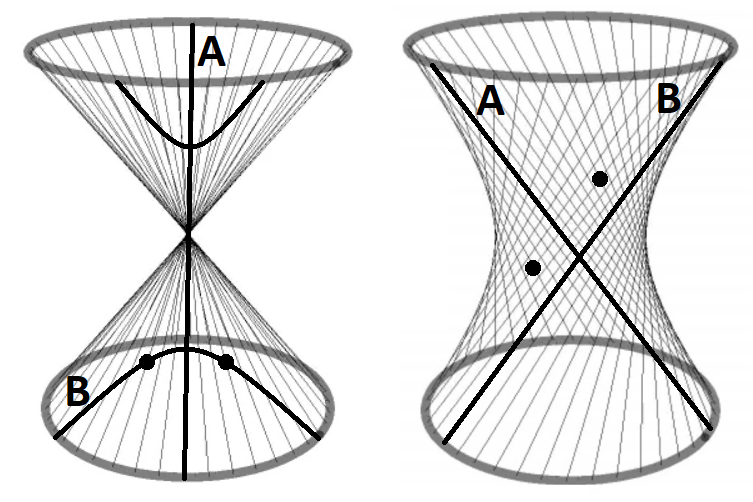}
\end{center}
\caption{Illustration of the map taking a point to its conjugate. The line spanned by the camera centers (right) maps to the vertex on the cone. The other lines in this family map to conics. Lines in the other family are preserved.}
\label{fig:conjugates2}
\end{figure}

\section*{Acknowledgments}
I would like to thank my two supervisors, Kristian Ranestad and Kathlén Kohn, for their help and guidance, for providing me with useful insights, and for their belief in my work. I would also like to thank the anonymous referees for providing helpful suggestions, and in particular for pointing out the method described in \cref{rem:method_for_computing}. Lastly, I thank Erin Connelly for pointing out an error in the table on page 11 and in the supplementary material, this error made it into the published paper, but is not present in this version. This work was supported by the Norwegian National Security Authority. 

\appendix
\section*{Appendix}

\subsection*{Proof behind tables in \cref{subsec:crit_for_two_views}}
\label{app:arguments_for_tables}
Let $P_1,P_2$ be two cameras with camera centers $p_1,p_2$, and fundamental form $F_P$. Let $\textbf{F}$ denote the space of bilinear forms on $\pxp$. By \cref{lem:correspndence_between_lines_and_quadrics}, there is an isomorphism between the set of quadrics passing through $p_1,p_2$, and lines $L\subset\textbf{F}$ that pass through $F_P$. The set of bilinear forms that are not of full rank is denoted by $\textbf{F}_2$. There are several different ways a line $l$ through $F_P$ can intersect $\textbf{F}_2$, namely:

When we have a finite number of intersections:
\begin{itemize}
\item Three distinct, real intersection points (one of which is $F_P$).
\item Three distinct intersection points, two of which are complex conjugates.
\item Two distinct intersection points of rank 2, $F_P$ has multiplicity 2.
\item Two distinct intersection points of rank 2, the one that is not $F_P$ has multiplicity 2.
\item Only one intersection point: $F_P$ with multiplicity 3.
\item Two distinct intersection points, $F_P$ and one which has rank 1. The latter will have multiplicity 2.
\end{itemize}
Then there are the ones where $L$ lies in $\textbf{F}_2$; here we look at the kernels, and at the number of rank 1 forms on $L$:
\begin{itemize}
\item Two distinct, real, rank 1 forms (implies all bilinear forms share the same left- and right kernel).
\item Two distinct, complex, rank 1 forms (implies all bilinear forms share the same left- and right kernel).
\item One form of rank 1, which has multiplicity 2 (implies all bilinear forms share the same left- and right kernel)
\item One form of rank 1, which has multiplicity 1 (implies all bilinear forms share the same left- OR right kernel, but never both)
\item No forms of rank 1, all bilinear forms share the same left- or right kernel
\item No forms of rank 1, all bilinear forms have distinct kernels.
\end{itemize}

This gives us a total of 12 different configurations to check. For each of these, we want to know what kind of quadric/camera configuration it corresponds to in $\p3$. 

Let $\PGL(3,\mathbb{R})$ be the projective general linear group of degree 3 (the group of real, invertible $3\times3$ matrices up to scale). This group acts on $\textbf{F}$ with an action that can be represented by matrix multiplication. Let $\PGL(3)_{F_P}$ be the subgroup of $\PGL(3)$ that fixes $F_P$. This gives us a group that acts on the $\p7$ of lines passing through $F_P$. This group will never take a line with one of the 12 configurations above to a line with a different configuration. For instance, a real invertible matrix can not take three distinct real points, to anything other than three distinct real points. In particular, this means that the 12 configurations above all lie in distinct orbits under this group action. 

Recall the isomorphism between the set of quadrics through $p_1,p_2$ and the set of lines through $F_P$. It gives us a similar group action on the set of quadrics passing through the camera centers $p_1,p_2$, namely the subgroup of $\PGL(4)$ that fixes the two camera centers. Here too, the camera/quadric configurations fall into 12 different orbits (not listed). Now we only need to check one representative from each orbit to find the exact correspondences.

By \cref{prop:only_camera_centers_matter}, the only property of the cameras that matters when considering critical configurations is their center. Furthermore, one pair of distinct points in $\p3$ is no different from any other pair. This means that when we make computations for the two view case, we are free to pick any two cameras $P_1,P_2$ with distinct centers. We use 
\begin{align*}
P_1=\begin{bmatrix}
1&0&0&0\\
0&1&0&0\\
0&0&1&0
\end{bmatrix},
&&P_2=\begin{bmatrix}
1&0&0&0\\
0&1&0&0\\
0&0&0&1
\end{bmatrix}.
\end{align*}
The fundamental matrix of these two cameras is 
\begin{align*}
F_P=\begin{bmatrix}
0&1&0\\
-1&0&0\\
0&0&0
\end{bmatrix}.
\end{align*}
Now we can go ahead and check the 12 possible configurations listed earlier. This will be done by picking a fundamental matrix $F_0$ such that the line $l$ spanned by $F_P$ and $F_0$ intersects $\textbf{F}_2$ the way we want and then checking what quadric it corresponds to. The full results are given in \cref{tab:appendix_table_large}.

\newpage
\begin{longtable}{
        @{}
        >{\raggedright\arraybackslash\begin{adjustbox}{valign=t}}p{0.13\textwidth}<{\end{adjustbox}}
        >{\raggedright\arraybackslash}p{0.45\textwidth}
        >{\raggedright\arraybackslash}p{0.36\textwidth}
        @{}
    }
    \caption{Possible intersections between $l$ and $\textbf{F}_2$, and their corresponding quadrics.}
    \label{tab:appendix_table_large}
    \\
    \toprule
    \bfseries\boldmath
    Matrix~$F_0$
    &
    \bfseries\boldmath
    Intersections between $l$ and $\mathbf{F}_2$
    &
    \bfseries\boldmath
    Configuration in $\p3$
    \\
    \midrule
    \endfirsthead
    \midrule
    \endhead
    \midrule
    \multicolumn{3}{@{}l}{Continues on the next page.}
    \endfoot
    \bottomrule
    \endlastfoot
    $\begin{bmatrix}
        0 & 0 & 0
        \\
        1 & 0 & 0
        \\
        0 & 0 & 1
    \end{bmatrix}$
    &
    Three distinct, real intersection points,
    (one of which is $F_P$)
    &
    Smooth ruled quadric, camera centers not on same line
    \\
    \addlinespace[.2cm]
    $\begin{bmatrix}
        1 & 0 & 0
        \\
        0 & 1 & 0
        \\
        0 & 0 & 1
    \end{bmatrix}$
    &
    Three distinct intersection points,
    two of which are complex conjugates.
    &
    Smooth, non-ruled quadric
    \\
    \addlinespace[.2cm]
    $\begin{bmatrix}
        1 & 0 & 0
        \\
        0 & 0 & 0
        \\
        0 & 0 & 1
    \end{bmatrix}$
    &
    Two distinct intersection points,
    the one that is not $F_P$ has multiplicity 2
    &
    A cone, cameras on different lines, no camera at vertex
    \\
    \addlinespace[.2cm]
    $\begin{bmatrix}
        0 & 0 & 0
        \\
        0 & 0 & 1
        \\
        1 & 0 & 0
    \end{bmatrix}$
    &
    Two distinct intersection points,
    $F_P$ has multiplicity 2
    &
    Smooth quadric, camera centers lie on the same line
    \\
    \addlinespace[.2cm]
    $\begin{bmatrix}
        0 & 0 &  1
        \\
        0 & 1 & 0
        \\
        1 & 0 & 0
    \end{bmatrix}$
    &
    Only one intersection point:
    $F_P$ with multiplicity 3
    &
    Cone, both cameras lie on the same line, neither lies at the vertex
    \\
    \addlinespace[.2cm]
    $\begin{bmatrix}
        0 & 0 & 0
        \\
        0 & 0 & 0
        \\
        0 & 0 & 1
    \end{bmatrix}$
    &
    Two distinct intersection points,
    $F_P$ and one which has rank 1.
    The latter has multiplicity 2
    &
    Union of two planes, camera centers in different planes
    \\
    \addlinespace[.2cm]
    $\begin{bmatrix}
        0 & 1 & 0
        \\
        0 & 0 & 0
        \\
        0 & 0 & 0
    \end{bmatrix}$
    &
    $l$ lies in $\mathbf{F}_2$,
    and contains two distinct, real, rank 1 forms\footnotemark
    &
    Union of two planes,
    camera centers lie on the intersection of the planes
    \\
    \addlinespace[.2cm]
    $\begin{bmatrix}
        1 & 0 & 0
        \\
        0 & 1 & 0
        \\
        0 & 0 & 0
    \end{bmatrix}$
    &
    $l$ lies in $\mathbf{F}_2$,
    and contains two distinct, complex, rank 1 forms\footnotemark[\value{footnote}]
    &
    Union of two complex conjugate planes,
    both cameras on intersection.
    \\
    \addlinespace[.2cm]
    $\begin{bmatrix}
        1 & 0 & 0
        \\
        0 & 0 & 0
        \\
        0 & 0 & 0
    \end{bmatrix}$
    &
    $l$ lies in $\mathbf{F}_2$,
    and contains one form of rank 1,
    which has multiplicity 2\footnotemark[\value{footnote}]
    &
    A single plane with multiplicity 2
    \\
    \addlinespace[.2cm]
    $\begin{bmatrix}
        0 & 0 & 1
        \\
        0 & 0 & 0
        \\
        0 & 0 & 0
    \end{bmatrix}$
    &
    $l$ lies in $\mathbf{F}_2$,
    and contains one form of rank 1,
    which has multiplicity 1\footnotemark
    &
    Union of two planes, one camera on the intersection
    \\
    \addlinespace[.2cm]
    $\begin{bmatrix}
        1 & 0 & 1
        \\
        1 & 1 & 0
        \\
        0 & 0 & 0
    \end{bmatrix}$
    &
    $l$ lies in $\mathbf{F}_2$,
    and contains no forms of rank 1,
    all bilinear forms share the same left or right kernel
    &
    Cone, one camera at the vertex
    \\
    \addlinespace[.2cm]
    $\begin{bmatrix}
        0 & 0 & 0
        \\
        0 & 0 & 1
        \\
        0 & 1 & 0
    \end{bmatrix}$
    &
    $l$ lies in $\mathbf{F}_2$,
    and contains no forms of rank 1,
    all bilinear forms have distinct kernels.
    &
    Union of two planes, cameras in same plane, neither at the intersection
\end{longtable}

\addtocounter{footnote}{-2}
 \stepcounter{footnote}\footnotetext{This implies all bilinear forms share the same left and right kernel}
 \stepcounter{footnote}\footnotetext{This implies all bilinear forms share the same left or right kernel}

\newpage
\addcontentsline{toc}{section}{References}
\bibliography{references.bib}

\end{document}